\newtheorem{theorem}{Theorem}
\newtheorem{lemma}{Lemma}
\newtheorem{prop}{Proposition}
\newcommand{\gf}{\mathfrak{g}}
\newcommand{\slnn}{\mathfrak{sl}_{n+1}}
\newcommand{\gln}{\mathfrak{gl}_{n}}
\newcommand{\sln}[1]{\mathfrak{sl}_{#1}}
\newcommand{\son}{\mathfrak{so}_{2n+1}}
\newcommand{\spn}{\mathfrak{sp}_{2n}}
\newcommand{\sond}{\mathfrak{so}_{2n}}
\newcommand{\dx}{\mathrm{dx}}
\newcommand{\ds}{\mathrm{ds}}
\newcommand{\dt}{\mathrm{dt}}
\newcommand{\dy}{\mathrm{dy}}
\DeclareMathOperator{\supp}{supp}
 \newenvironment{dedication}
   {
    \itshape             
    \raggedleft          
   }
   {\par 
   }
\begin{document}
\title {
  Limit shape for infinite rank limit of tensor power decomposition for Lie
  algebras of series $\son$.}
\author{A. Nazarov, P. Nikitin, O. Postnova }\date{}

\maketitle
\begin{dedication}
\;Dedicated to the memory of Vladimir Dmitrievich Lyakhovsky ${(1942-2020)}$
\end{dedication}

\begin{abstract}
  We consider the Plancherel measure on irreducible components of
  tensor powers of the spinor representation of $\son$. The
  irreducible representations correspond to the generalized Young
  diagrams. With respect to this measure the probability of an
  irreducible representation is the product of its multiplicity and
  dimension, divided by the total dimension of the tensor product. We
  study the limit shape of the generalized Young diagram when the
  tensor power $N$ and the rank $n$ of the algebra tend to infinity
  with $N/n$ fixed. We derive an explicit formula for the limit shape
  and prove convergence to it in probability. We prove central limit
  theorem for global fluctuations around the limit shape.
  
\end{abstract}
\section*{Introduction}

The emergence of the limit shapes of the random Young diagrams goes
back to Ulam's problem on the length of the maximal increasing
subsequence in a uniform random sequence \cite{ulam1961monte}. Through
the use of Robinson-Schensted-Knuth correspondence
\cite{robinson1938representations,schensted1961longest,knuth1970permutations}
a pair of Young tableaux is associated to the random sequence. Then
the length of the maximal increasing subsequence is equal to the
length of the first row of Young diagram. Uniform distribution on
number sequences after RSK mapping gives rise to Plancherel measure on
Young diagrams \cite{vershik1977asymptotics,logan1977variational}.


The limit shape problem for the Young diagram could also be stated for the
tensor product decomposition of irreducible representations of semisimple Lie
algebras. Due to the Schur-Weyl duality the multiplicities of the irreducible
components in the $N$-th tensor power of the vector fundamental representation
of $\slnn$ are the dimensions of the irreducible representations of $S_N$. The
Plancherel-type measure (\ref{eq:3}) associated with this decomposition was
first considered by Kerov \cite{kerov1986asymptotic}. The asymptotic behavior
of this measure was studied in three regimes: $N\to\infty$ with $n$ fixed,
$N\to\infty, n\to\infty$ with $N/n$ fixed and $N,n\to\infty$ with
$N/n^{2}$ fixed. The first case  was studied \cite{kerov1986asymptotic}
and later generalized to all simple Lie algebras in
\cite{nazarov2018limit,Postnova2019,tate2004lattice}. For the second case
Kerov discovered that Vershik-Kerov-Logan-Shepp limit shape of Young
diagrams with respect to the Plancherel measure on $S_{N}$ as $N\to\infty$
also appears as the limit shape with respect to this measure. Later, P. Biane
\cite{biane2001approximate,biane1998representations} described the limit
shapes for the third case. But the asymptotical behavior of the
Plancherel measure for $N$-th tensor powers of representations of Lie
algebras of types $\son,\spn,\sond$ has not been studied yet in the
limit $N,n\to\infty$.

In the present paper we consider the statistics of irreducible
components in the $N$-th tensor power of the spinor representation
$V^{\omega_{n}}$ of the algebra $\son$ in the limit $N,n\to\infty$. We
derive the limit shape in the limit when $N\to\infty$ and $N/n$ is
finite. It is convenient to use the description of irreducible
components in terms of generalized Young diagrams
\cite{kashiwara1994crystal,nakashima1993crystal}, that we call below
``diagrams''. Our main result is most conveniently stated in
coordinates that correspond to the diagrams (see
Fig.~\ref{fig:Bn-young-diagram-and-limit-shape-n20-N200}):

\begin{theorem}
  \label{diagram-convergence-theorem}
  As $n\to\infty,\; N\to\infty,\; c=\lim_{n,N\to\infty}\frac{N+2n-1}{n}=\mathrm{const}$,
  the upper boundary $f_{n}$ of a rotated and scaled generalized Young diagram for a highest weight in the decomposition
  of tensor power of the spinor representation
  $\left(V^{\omega_{n}}\right)^{\otimes N}$ of simple Lie algebra $\son$
  into irreducible representations converges in probability in the supremum
  norm $||\cdot||_{\infty}$ to the limiting shape given by the formula
 $f(x)=1+\int_{0}^{x}(1-4\rho(t))\dt$,
 where the limit density $\rho(x)$ is written explicitly as
\begin{equation}
  \label{eq:68}
  \rho(x)= \theta\left(\frac{c}{2}-|x|\right)      \frac{1}{2\pi}\Re\left[\arccos\frac{c-4}{\sqrt{c^{2}-4x^{2}}}\right]
\end{equation}
where $\theta\left(\frac{c}{2}-|x|\right)$ is the Heaviside step function.

That is, for all $\varepsilon>0$ we have
  \begin{equation}
   \label{eq:96}
    \mathbb{P}\left(\sup_{x\in\mathbb{R}}|f_{n}(x)-f(x)|>\varepsilon\right)\xrightarrow[n\to\infty]{} 0
  \end{equation}
\end{theorem}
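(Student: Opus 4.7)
The plan is a saddle-point analysis of the Plancherel measure, following the Vershik--Kerov, Logan--Shepp and Biane strategies, adapted to the root system $B_n$ and the spinor representation. The first step is to derive an exact formula for $\mathbb{P}(\lambda)$. Because the character of $V^{\omega_n}$ on the maximal torus factorises as $\prod_{i=1}^n(x_i^{1/2}+x_i^{-1/2})$, the multiplicity of a weight $\mu$ in $(V^{\omega_n})^{\otimes N}$ is $\prod_i\binom{N}{N/2-\mu_i}$. Antisymmetrising with respect to the $B_n$ Weyl group of signed permutations yields a closed determinantal formula for $m_\lambda$, and the Weyl dimension formula gives $\dim V^\lambda\propto\prod_{i<j}(l_i^2-l_j^2)\prod_i l_i$ with $l_i=\lambda_i+\rho_i$. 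Together these produce an explicit expression for $\mathbb{P}(\lambda)$ amenable to Stirling asymptotics.

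Next, I would pass to the continuum. Setting $x_i=l_i/n$ and letting $\rho_n=n^{-1}\sum_i\delta_{x_i}$, Stirling's formula applied to the binomials and the Vandermonde converts $n^{-2}\log\mathbb{P}(\lambda)$ into a functional
\[
\mathcal{F}[\rho]=\iint\log|x^2-y^2|\,\rho(x)\rho(y)\,dx\,dy-c\!\int\Phi(x;c)\rho(x)\,dx+\mathrm{const},
\]
where $\Phi$ encodes the binomial entropy $-p\log p-(1-p)\log(1-p)$ in the rescaled variable and $c=\lim(N+2n-1)/n$. Admissibility demands $0\le\rho\le 1/2$ (equivalent to $|f'|\le 1$) together with a normalisation condition. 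The Euler--Lagrange equation for the unique maximiser $\rho$ is a symmetric singular integral equation of Tricomi type on the symmetric interval $[-\sqrt{2c-4},\sqrt{2c-4}]$; recasting it as a Riemann--Hilbert problem for the resolvent $G(z)=\int\bigl(\tfrac{1}{z-y}+\tfrac{1}{z+y}\bigr)\rho(y)\,dy$ with square-root endpoint behaviour and extracting $\rho$ from its jump produces the arctangent expressions in (\ref{eq:68}). The two regimes $c\ge 4$ versus $c\in[2,4]$ reflect whether the upper constraint $\rho\le 1/2$ saturates on a central interval, in direct analogy with the phase of the Vershik--Kerov--Logan--Shepp shape.

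Finally, strict concavity of $\mathcal{F}$ modulo rigid motions gives a large-deviation estimate $\mathbb{P}(\|\rho_n-\rho\|_\ast>\varepsilon)\le e^{-c(\varepsilon)n^2}$ in a suitable weak norm, hence weak convergence of $\rho_n$ to $\rho$. Because the rescaled profiles $f_n$ are $1$-Lipschitz and supported in a fixed compact interval, Arzel\`a--Ascoli upgrades this to uniform convergence, yielding (\ref{eq:96}). The hardest step I expect is the explicit solution of the Tricomi equation: producing the arctangent formula and detecting the phase transition at $c=4$ requires careful Riemann--Hilbert bookkeeping and constraint tracking. A secondary difficulty is turning the formal Stirling expansion in the second step into uniform bounds strong enough to justify the saddle-point approximation rigorously and to supply the concentration exponent needed in the last step.
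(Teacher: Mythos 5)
Your proposal follows essentially the same route as the paper: an explicit formula for the Plancherel measure (the paper cites the Kulish--Lyakhovsky--Postnova product formula rather than re-deriving it from the factorized character $\prod_i(x_i^{1/2}+x_i^{-1/2})$), a Stirling reduction to a log-gas energy with the $B_n$-symmetrized kernel $\ln|x^2-y^2|$ and a binomial-entropy potential, a Riemann--Hilbert solution of the constrained equilibrium problem with support $[-\sqrt{2c-4},\sqrt{2c-4}]$ and the ansatz $\rho=\tfrac12-\rho_1$ for $c\in[2,4]$, and finally a strict-convexity concentration estimate (combined with an $e^{n\ln n}$ count of dominant weights) upgraded to uniform convergence via the $1$-Lipschitz property of the profiles. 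The only corrections worth noting are that for $c\in[2,4]$ the constraint $\rho\le\tfrac12$ saturates on the outer bands $\sqrt{2c-4}\le|x|\le c/2$ rather than on a central interval, and that the paper replaces your Arzel\`a--Ascoli step by the explicit inequality $\|f\|_\infty\le C\,Q[f]^{1/4}$.
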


We also demonstrate, that it is easy to prove the central limit
theorem for the global fluctuations of diagrams that is presented
below. The proof is based upon the use of the biorthogonal ensembles
techniques \cite{breuer2017central}.
\begin{theorem}[Central limit theorem]
  \label{central-limit-theorem}
  For a linear statistics $X_{f}=\sum_{i=1}^{n} f(x_{i}^{2})$, where $f\in
  C^{1}([0,2c-4])$ and $x_{i}=\frac{a_{i}}{2n}$ are the
  midpoints of the (scaled) intervals, where the upper boundary of a
  random Young diagram, rotated $45\degree$ decreases, we have
  \begin{equation}
    \label{eq:clt}
    X_{f}-\mathbb{E}X_{f}\to \mathcal{N}\left(0,\sum_{k\geq 1}k |\hat{f}_{k}|^{2}\right),
  \end{equation}
  in distribution as $n,N\to\infty$ with $c=\lim\frac{N+2n-1}{n}$, where
  the Fourier coefficients $\hat{f}_{k}$ are defined as
  \begin{equation}
    \label{eq:107}
    \hat{f}_{k}=\frac{1}{2\pi}\int_{0}^{2\pi}f\left((c-2)(\cos \theta+1)\right)e^{-ik\theta}\mathrm{d\theta}.
  \end{equation}
\end{theorem}

The paper is organized as follows. In Section \ref{sec:statements} we
describe tensor power decomposition for spinor representation of
$\son$, that is used to introduce the probability measure. We fix the
required notations, introduce the coordinates $x$ and describe the
generalized Young diagrams and their boundaries $f_{n}$ that are used
to state Theorem \ref{diagram-convergence-theorem}. We then explain
the multiplicity formula by skew Howe duality and discuss the
insertion algorithm, that can be used to sample random Young diagrams.

The proof of the Theorem \ref{diagram-convergence-theorem} is
contained in the Sections \ref{sec:variational-problem} and
\ref{sec:conv-prob-meas}. In the Section \ref{sec:variational-problem}
the variational problem for the limit shape is stated and solved and
function \eqref{eq:68} is obtained. In Section
\ref{sec:conv-prob-meas} we prove the convergence of diagrams in
probability, thus completing the proof of the Theorem
\ref{diagram-convergence-theorem}. The proof of the Theorem
\ref{central-limit-theorem} is presented in Section
\ref{sec:centr-limit-theor}.

In Conclusion we state open problems related to the presented results.

\section{Notations and probability measure}
\label{sec:statements}
First we recall the definition of the Plancherel measure for tensor
products. Let $\gf$ be a simple finite-dimensional Lie algebra of rank
$n$ and $V^{\nu}$ be its irreducible finite-dimensional highest-weight
representation. Denote simple roots of $\gf$ by
$\alpha_{1},\dots, \alpha_{n}$ and fundamental weights by
$\omega_{1},\dots \omega_{n}$, $(\alpha_{i},\omega_{j})=\delta_{ij}$.
The Weyl group is denoted by $W$, the main Weyl chamber by $C_{0}$,
the root system by $\Delta$ and the set of positive roots by
$\Delta^{+}$. The root lattice $\mathbb{Z}\Delta$ is denoted by $Q$
and for a weight $\nu$ we denote by $Q(\nu)$ the set of weights that
are obtained by repeated subtractions of positive roots from $\nu$:
$Q(\nu)=\nu-\mathbb{Z}_{+}\Delta^{+}$.

Tensor power of $V^{\nu}$ is a completely reducible representation and can be decomposed as:
\begin{equation}
  \left(V^{\nu}\right)^{\otimes N}\cong \bigoplus_{\lambda\in Q(\nu)\cap C_{0}}
  W_{\lambda}(V^\nu, N)\otimes
  V^{\lambda},
\end{equation}

The sum is taken over irreducible components of the tensor product and
$W_{\lambda}(V^{\omega},N)$ is the space of multiplicities:
\begin{equation}
W_{\lambda}(V^\nu, N)\simeq \mathrm{Hom}_{\gf}( (V^{\nu})^{\otimes N} , V^\lambda).
    \end{equation}
Its dimension $M_{\lambda}^N\equiv M_{\lambda}^N(V^\nu)$ is the multiplicity of
$V^{\lambda}$ in the tensor product. \\
This decomposition gives the identity
\begin{equation}
  \label{eq:1}
  \left(\dim V^{\nu}\right)^{N}= \sum_{\lambda\in Q(\nu)\cap C_{0}} M^N_{\lambda} \dim V^{\lambda},
\end{equation}

This formula can be used to introduce the probability measure on the
set of dominant integral weights $Q(\nu)\cap C_{0}$. By the analogy
with the representation theory of the permutation group we call it
Plancherel measure:
\begin{equation}
  \label{eq:3}
  p^N(\lambda)=\frac{M^{N}_{\lambda}\dim{V^{\lambda}}}{\left(\dim{V^{\nu}}\right)^{N}}.
\end{equation}

From now on we focus on $\gf=\son$. In the standard orthogonal basis the simple roots are
$\{\alpha_i=e_i-e_{i+1}|i=1\dots n-1\}\cup \{ \alpha_n=e_n\}$. The
root system $B_n$ consists of the roots
$\Delta=\{\pm e_i\pm e_j|i\neq j\}\cup \{\pm e_i\}$, positive roots
are $\Delta^+=\{e_i+e_j|i<j\}\cup \{e_i\}\cup \{e_j-e_i|j<i\}$. The
fundamental weights of $B_n$ in the same basis are  given by formulae
\begin{equation}
  \begin{array}{l}
  \omega_1=e_1\\
  \omega_2=e_1+e_2\\
  \dots\\
  \omega_{n-1}=e_1+\dots + e_{n-1}\\
    \omega_n=\frac{1}{2}(e_1+\dots+e_n)
  \end{array}    
\end{equation}

We consider tensor powers $\left(V^{\omega_{n}}\right)^{\otimes N}$ of the last fundamental representation
$\nu=\omega_{n}$, that is also known as spinor representation.

Dominant integral weights $\lambda$ are linear combinations of
fundamental weights with non-negative integer coefficients $l_{n}$,
that are called Dynkin labels:
\begin{equation}
  \label{eq:2}
  \lambda=\sum_{i=1}^{n}l_{n}\omega_{n}.
\end{equation}
In orthogonal coordinates such a weight is written as
\begin{equation}
  \label{eq:57}
  \lambda=\sum_{i=1}^{n}\left(l_{i}+l_{i+1}+\dots+\frac{l_{n}}{2}\right)e_{i}.
\end{equation}

Dominant integral weights can be depicted by the generalized Young diagrams. For
algebras of series $\son$ it is convenient to use the diagrams with
boxes of two different widths, one being twice the other
\cite{nakashima1993crystal,kashiwara1994crystal} (see also
\cite{hong2002introduction}). Below we will the generalized Young
diagrams for the series $\son$ ``the diagrams''. In the present case the analogue of
Littlewood-Richardson rule for tensor product decomposition is more
difficult than for ordinary Young diagrams for $\sln{n}$ and number of
boxes in the diagram is not equal to the tensor power $N$. Since there
are boxes of two different widths it is  important to distinguish
between the number of boxes in a row and the length of the row. 
The length of the diagram's row $\lambda_{i}$ is equal to the corresponding orthogonal
coordinate. The number of boxes is equal to $\sum_{j=i}^{n}l_{i}$. In
such diagrams first $l_{n}$ boxes are of width one half. See an example in Fig~\ref{fig:young-bn}.
\begin{figure}[h]
  \includegraphics[width=10cm]{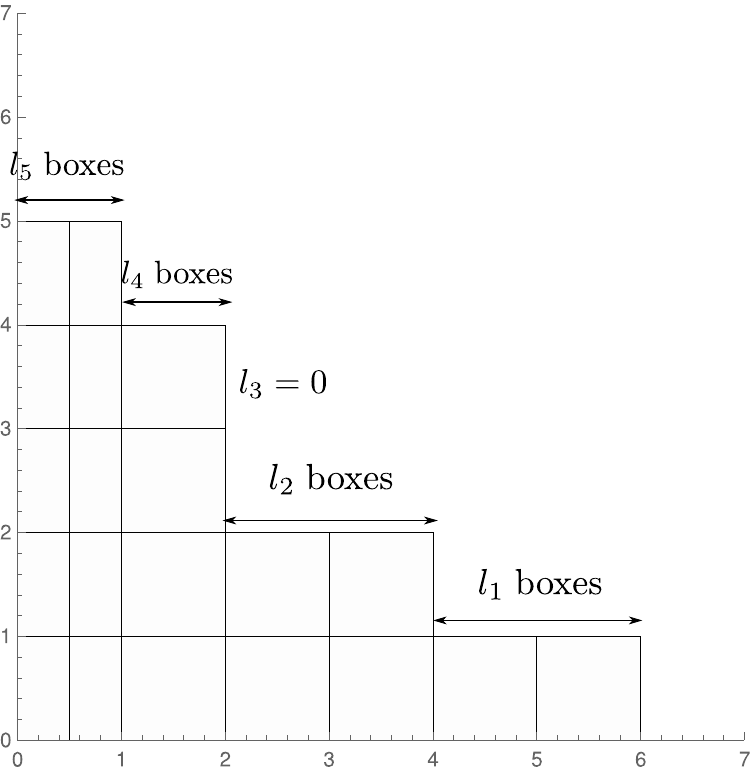}
  \caption{The generalized Young diagram for $B_5$ weight $\lambda$ with coordinates
    $[6,4,2,2,1]$ in orthogonal basis and Dynkin labels $(2,2,0,1,2)$}
  \label{fig:young-bn}
\end{figure}

For convenience we use the coordinates $\{a_{i}\}$ given by the formula
\begin{equation}
  \label{eq:6}
  a_{i}=2\sum_{j=i}^{n-1}l_{j}+l_{n}+2(n-i)+1.
\end{equation}
Such coordinates are positive integer numbers for integral dominant
weights and $a_{i}\geq a_{j}$ for $i<j$. 
The coordinates $\{a_{i}\}_{i=1}^{n}$ has natural interpretation if we
scale the diagram by the factor $2\sqrt{2}$, rotate it
45\degree\;  counterclockwise and shift it in such a way that lowest
point has coordinate $(2n)$. Then the upper border of the diagram is a
graph of piecewise linear function and $a_{i}$ is an $x$-coordinate of
the middle of decreasing piece number $i$, if we count decreasing
pieces from the right. See
Fig~\ref{fig:young-rotated-ai-geom-meaning}. 

\begin{figure}[h]
  \centering
  \includegraphics[width=10cm]{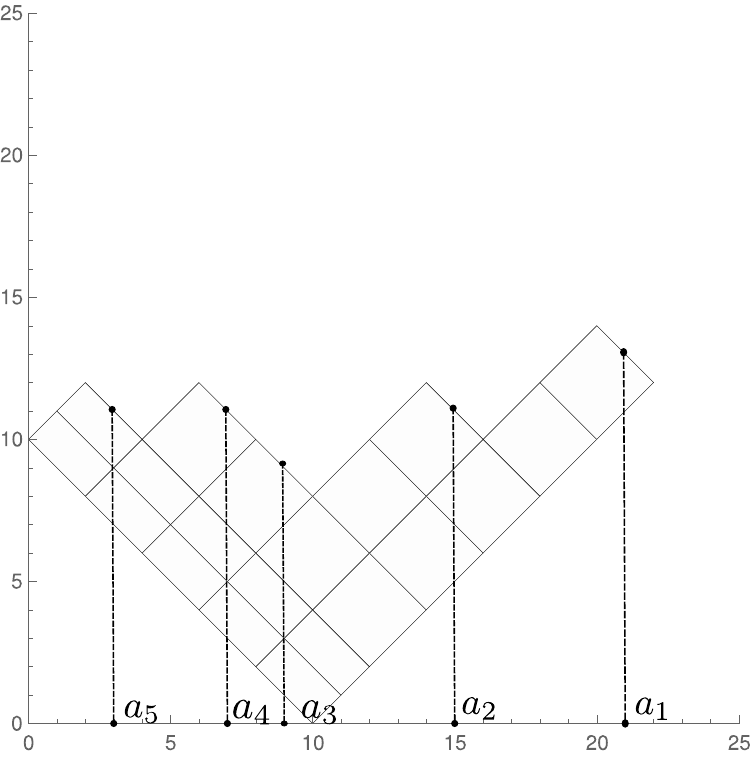}
  \caption{Rotated and scaled generalized Young diagram and the geometrical meaning of
    the coordinates $\{a_i\}_{i=1}^{n}$. }
  \label{fig:young-rotated-ai-geom-meaning}
\end{figure}

The probability measure on the integral dominant weights is introduced by the
formula (\ref{eq:3}) as
\begin{equation}
  \mu_{n,N}(\lambda)=\frac{M^N_{\lambda}\dim V^{\lambda}}{2^{nN}}.
  \label{eq:5}
\end{equation}

We consider the limit $N,n\to\infty$ such that ratio of $n$ and $N$ tends to a finite constant:
\begin{equation}
  \label{eq:15}
 c=\lim_{N,n\to\infty}\frac{N+2n-1}{n}, \quad c=\mathrm{const}.
\end{equation}
We are interested in the limiting probability distribution on
the irreducible components of the tensor power decomposition. Since
dominant integral weights are depicted by the diagrams, the measure
\eqref{eq:5} can be seen as a probability measure on the diagrams.
Therefore we are interested in the limit shape of generalized Young diagrams
with respect to the measure $\mu_{n,N}$.

Random diagrams with respect to the measure \eqref{eq:5} can be
efficiently sampled with the following algorithm, introduced by
Benkart and Stroomer \cite{benkart1991tableaux}. The algorithm uses
Sundaram tableaux for $\son$ and modified Berele insertion
\cite{berele1986schensted}, which works as follows. The tableau that
corresponds to some basis element in the irreducible representation
with the diagram $\lambda$ has the shape $\lambda$ that is filled with
the numbers ${1,\bar 1,2,\bar 2,\dots,n,\bar n,\infty}$ with the order
$1<\bar 1<2\bar 2\dots<\infty$, and can have first column of boxes
with width $\frac{1}{2}$ that can not contain $\infty$. The numbers
strictly increase along the columns and weakly increase along the rows
of the tableau, moreover, there are no numbers smaller than $\bar i$
below the $i$-th row. The tableau for the spinor representation
$V^{\omega_{n}}$ is the full column with the width $\frac{1}{2}$. Due
to the condition we have two choices $i,\bar i$ in the row number $i$,
so that total number of such tableaux is $2^{n}$ and they are in the
bijection with the basis elements of the spinor representation.

Tensor product decomposition
$V^{\lambda}\otimes V^{\omega_{n}}=\bigoplus_{\mu}V^{\mu}$ is
represented by the insertion of the column, corresponding to
$V^{\omega_{n}}$. Insertion of this column to the tableau that does
not contain first half-width column is done by erasing all the boxes
with $\infty$ and adjoining the column from the left. Insertion to the
diagram with the first half-width column is done by taking the
inserted column and the first column, reading them from the bottom up
row-by-row and producing a sequence of numbers from the two
half-columns by the following rule:
$(k,k)\to k, (\bar k,\bar k)\to \bar k, (k,\bar k)\to \emptyset, (\bar
k,k)\to \bar k, k$. We then insert these numbers one-by-one with a
modified Berele insertion, that inserts the number by the Schensted
insertion \cite{schensted1961longest,knuth1970permutations}, but if
the insertion of some number $i$ into some box should push the number
$\bar i$ from this box to some row below the row number $i$, then the
box is erased. The empty box is then shifted by jeu de taquin slides
to the edge of the diagram and is filled by $\infty$.

Therefore to sample the diagrams with the measure \eqref{eq:5} we
produce $N$ uniform random vectors of zeros and ones with length $n$,
that represent signs of the numbers in the half-width one-column
tableaux, and then insert these half-width one-column tableaux
one-by-one. In
Fig~\ref{fig:Bn-random-young-diagram-and-limit-shape-n50-N300} we
present a random diagram for $\mathfrak{so}_{101}$ and tensor power
$300$ sampled using this algorithm.

\begin{figure}[h]
  \centering
  \includegraphics[width=13cm]{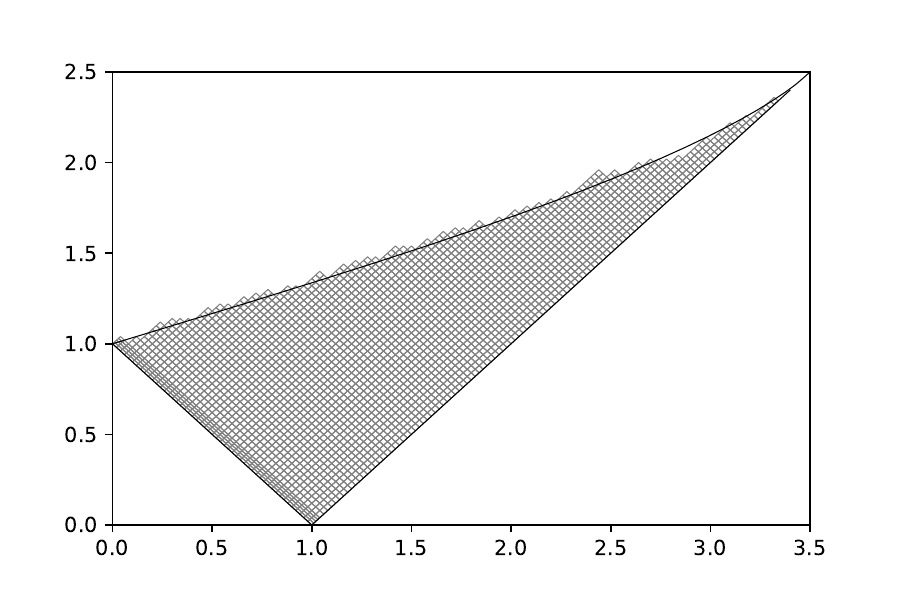}
  \caption{Rotated and scaled random diagram for $\mathfrak{so}_{101}$ and $N=300$
    and limit shape $f(x)$ of the generalized Young diagrams for $c=\frac{N}{n}+2=8$.}
  \label{fig:Bn-random-young-diagram-and-limit-shape-n50-N300}
\end{figure}

We scale the diagram by the
factor $\frac{\sqrt{2}}{n}$, rotate it $45\degree$ counterclockwise
and shift along $x$ axis in such a way that the lowest point has
coordinates $(1,0)$. This corresponds to a rescaling of the
coordinates $\{a_{i}\}$ as $x_{i}=\frac{a_{i}}{2n}$. See
Fig~\ref{fig:Bn-young-diagram-and-limit-shape-n20-N200} for an example
of the most probable diagram for $n=20$, $N=200$ and limit shape for
$c=12$. The upper border of the diagram is a graph of piecewise linear
function $f_{n}(x)$, which is almost everywhere differentiable and
$f_{n}'(x)=\pm 1$ if $x\neq \frac{i}{2n}$. We will prove that
piece-wise linear functions $f_{n}(x)$ converge in probability w.r.t.
to the probability measure \eqref{eq:5} to a continuous smooth
function $f(x)$ when $n\to\infty$.

\begin{figure}[h]
  \centering
  \includegraphics[width=10cm]{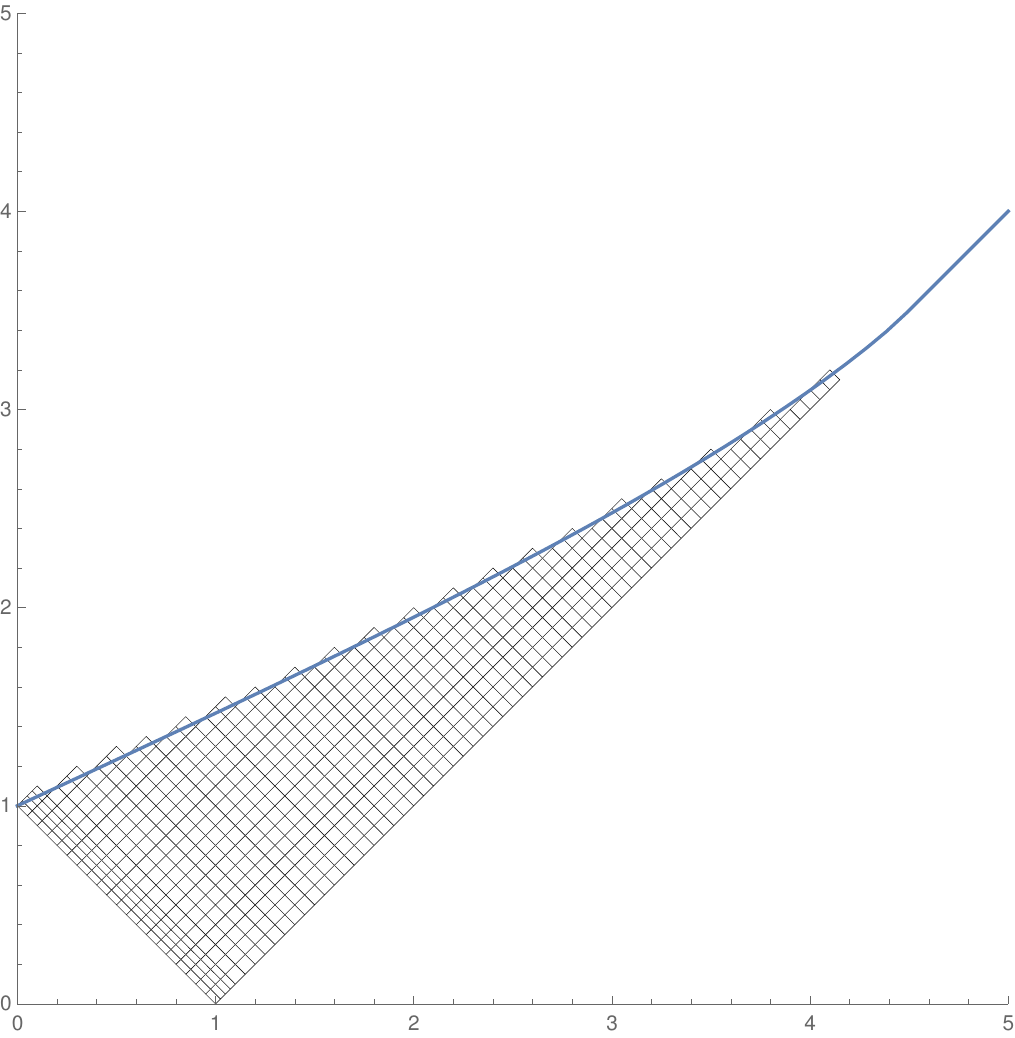}
  \caption{Rotated and scaled diagram for $B_{20}$ and $N=200$
    and limit shape $f(x)$ of the generalized Young diagrams for $c=\frac{N}{n}+2=12$.}
  \label{fig:Bn-young-diagram-and-limit-shape-n20-N200}
\end{figure}

To derive the limit shape it is convenient to consider the diagrams as
the particle point processes with coordinates $\{x_{i}\}_{i=1}^{n}$.
Introduce the piecewise constant function
$\rho_{n}(x)=\frac{1}{4}(1-f_{n}'(x))$. It is equal to zero on an
interval of the length $\frac{1}{n}$ if there is no particle in the
middle of the interval and is equal to $\frac{1}{2}$ if there is a
particle, which means that there is one particle on two intervals of
length $\frac{1}{2n}$. So the function $\rho_{n}(x)$ can be called
particle density.
The convergence of the diagrams to the limit shape leads to the
convergence of particle density functions $\rho_{n}$ to a limit
particle density $\rho(x)$.

Due to our choice of normalization, limit density $\rho(x)$ is
connected to a derivative of limit function $f(x)$ of the diagrams
by the formula
\begin{equation}
  \label{eq:61}
  f'(x)=1-4\rho(x),
\end{equation}
and limit shape can be recovered from the explicit expression for
$\rho(x)$ by the formula
\begin{equation}
  \label{eq:62}
  f(x)=1+\int_{0}^{x}(1-4\rho(t))\dt.
\end{equation}

It is more convenient to solve the variational problem for the limit
density $\rho(x)$.

In order to state the variational problem we need to write the
explicit formula for the probability measure $\mu_{n,N}$. To do so we
recall that P.P.~Kulish, V.D.~Lyakhovsky and O.V.~Postnova derived the
explicit formulae for tensor product decomposition multiplicities
using Weyl group symmetry and recurrence relations
\cite{kulish2012tensor,kulish2012tensorb2,kulish2012multiplicity,kulish2012multiplicityJPC}.


For the case of $\son$ and $\left(V^{\omega_{n}}\right)^{\otimes N}$,
for $\lambda $ written in the coordinates $\{a_{i}\}$ that are
described above, the formula is: 
 \begin{equation}
    \tilde{M}^{\omega_{n},N}_{\lambda(a_1\dots a_{n})}=
   \prod_{k=0}^{n-1}\frac{\left(
N+2k\right) !}{2^{2k}\left( \frac{N+a_{k+1}+2n-1}{2}\right) !\left( \frac{%
N-a_{k+1}+2n-1}{2}\right) !}\prod_{l=1}^{n}a_{l}\prod_{ i<j %
}\left( a_{i}^{2}-a_{j}^{2}\right).\rule{0mm}{8mm}
   \label{bn}
\end{equation}
 
Note that the factors in the numerator vanish at the boundaries of the
Weyl chambers, shifted by Weyl vector
$-\rho=-\sum_{i=1}^{n}\omega_{i}$ and the denominator provides that
$\tilde{M}^{\omega_{1},N}_{\lambda}$ satisfies the boundary conditions
and also ensures that the whole expression is anti invariant w.r.t.
Weyl group transformations.
   
Note that there are two congruence classes of weights, one is parametrized by even values of
$a_{i}$ while another by odd. The class is determined by the parity of $N$. For $N$ even we get
$a_{i}$ odd and vice versa.

This result can be understood as a consequence of the skew Howe
duality between $\son$ and $\mathfrak{o}_{N}$. By theorem, presented,
for example, in ~\cite{howe1989remarks}, the exterior algebra
$\bigwedge\left(\mathbb{C}^{2n+1}\otimes \mathbb{C}^{k}\right)$ admits
a multiplicity-free decomposition
$\bigwedge\left(\mathbb{C}^{2n+1}\otimes \mathbb{C}^{k}\right)\simeq
\bigoplus_{\lambda}V_{\son}^{\lambda}\otimes
V_{\mathrm{Pin}_{2k}}^{\bar\lambda'}$, where $\bar\lambda'$ is a
generalized Young diagram that is complement conjugate to the diagram
$\lambda$ inside of the $n\times k$-rectangle. From this decomposition
the multiplicity formula
$\left(V_{\son}^{\omega_{n}}\right)^{\otimes
  2k}\simeq\bigoplus_{\lambda}2^{1-k}\mathrm{dim}V_{\mathfrak{so}_{2k}}^{\bar\lambda'}
V_{\son}^{\lambda}$ can be deduced \cite{2021arXiv211112426N}. Setting
$N=2k$ and expressing the dimension of the
$\mathfrak{so}_{N}$-representation in terms of the coordinates
$\{a_{i}\}$ we arrive at the formula \eqref{bn}.

We use the Weyl dimension formula
\begin{equation}
\label{weyl}
\dim V^{\lambda}=\prod_{\alpha\in\Delta^{+}} \frac{(\lambda+\rho,\alpha)}{(\rho,\alpha)}=\frac{2^{-n^2+2n}n!}{(2n)!(2n-2)!\dots 2!}\cdot \prod_{i< j}(a_i^2-a_j^2)\prod_{l=1}^{n}a_l,
\end{equation}
thus we obtain the discrete probability measure  with the explicit density function (or the probability mass function):
\begin{multline}
\label{meas}
\mu_{n,N}(\lambda)=
\mu_{n,N}(\{a_i\})
=\frac{\tilde{M}^{\omega_{n},N}_{\lambda(a_1\dots a_{n})}\dim V^{\lambda}}{(2^n)^N}=\\
=
\prod_{k=0}^{n-1}\frac{\left(
N+2k\right) !}{2^{2k}\left( \frac{N+a_{k+1}+2n-1}{2}\right) !\left( \frac{%
N-a_{k+1}+2n-1}{2}\right) !}\prod_{i< j}(a_i^2-a_j^2)^2\prod_{l=1}^{n}a_l^2\cdot\frac{2^{-n^2+2n-nN}n!}{{(2n)!(2n-2)!\dots 2!}}.
\end{multline}  

\section{Variational problem for the limit shape}
\label{sec:variational-problem}
To prove the convergence of generalized Young diagrams to the limit shape, we need
to consider the upper boundaries of the rotated diagrams as functions $f_{n}$
with bounded derivative. Then we can prove convergence in a space of
such functions with respect to certain distance. Our approach to the proof of
convergence is very similar to the proof of Vershik-Kerov-Logan-Shepp
theorem in the book by Dan Romik \cite{romik2015surprising}.

We first rewrite the probability of configuration \eqref{eq:5} as an exponent
of a quadratic functional on rotated diagram boundaries. Looking for a minimum
of this functional, we obtain a variational problem that we solve in Lemmas
\ref{lemma:rho-for-c-greater-4},~\ref{lemma:rho-for-c-less-than-4},
\ref{lemma:explicit-limit-for-rho}.

\subsection{Probability of configuration as an exponent of a quadratic
  functional}
\label{sec:prob-conf-as}

We need to rewrite the formula for the probability measure in the form that is
more convenient for analysis. We do so in the following Lemma. 
  \begin{lemma}
    \label{lemma:conv-empir-meas} 

    Denote by $a_{-i},i>0$ a ``mirror image'' of $a_{i}$:
    \begin{equation}
      \label{eq:12}
      a_{-i}\equiv -a_{i},
    \end{equation}
    Then we can rewrite the measure  \eqref{meas}  in the form:
    \begin{equation}
      \label{eq:13}
      \mu_{n}(\left\{a_{i}\right\}_{i=-n,i\neq 0}^{n})=\frac{1}{Z_{n}}\prod_{i<j;i,j\neq 0;i,j=-n}^{n}|a_{i}-a_{j}|
      \cdot \prod_{l=-n,l\neq 0}^{n}\exp\left[-(2n)V_{0}\left(\frac{a_{l}}{2n}\right)-e_{n}\left(a_{l}\right)\right],
    \end{equation}
    where
    \begin{equation}
      V_{0}(u)=\frac{1}{4}\left[\left(\frac{c}{2}+u\right)\ln\left(\frac{c}{2}+u\right)+\left(\frac{c}{2}-u\right)\ln\left(\frac{c}{2}-u\right)\right],
    \end{equation}
    $e_{n}(u)=\frac{1}{4}\ln\left((cn)^{2}-u^{2}\right)+\frac{1}{2}\ln |u|+\mathcal{O}\left(\frac{1}{n}\right)$ and $Z_{n}$ does not
    depend on $a_{l}$ and  an additional condition \eqref{eq:12} is satisfied.

    The function  $V_{0}(u)$ is twice continuously differentiable on 
    $(-\frac{c}{2},\frac{c}{2})$, $|V_{0}''(u)|\leq C(1+|u-c/2|^{-1}+|u-c/2|^{-1})$ and $e_{n}$
    is uniformly bounded by $C\ln 2n$. 
    \begin{proof}
      We first extract the contribution that does not depend on $a_{k}$ from  the expression \eqref{meas} 
      \begin{equation}
        \label{eq:10}
        C_{n}=\frac{2^{-n^{2}+2n-nN}n!}{(2n)!(2n-2)!\dots 2!}\prod_{k=0}^{n-1} \frac{(N+2k)!}{2^{2k}}.
      \end{equation}
      The exponential term can be written for $l=1,\dots,n$ as
      \begin{equation}
        \label{eq:11}
        \exp\left[\ln\left(\left(\frac{N+a_{l}+2n-1}{2}\right)!\right)\right]\cdot\exp\left[\ln\left(\left(\frac{N-a_{l}+2n-1}{2}\right)!\right)\right]\cdot\exp\left[\ln|a_{l}|\right],
      \end{equation}
      note that we keep another product $\prod_{k} |a_{k}|$ outside of the exponent for future use. 
      Use the notation \eqref{eq:5}. First we use Stirling approximation formula for
      factorials to write the exponential term as
      \begin{multline}
        \label{eq:14}
        2\pi\exp\left[\frac{cn+a_{l}}{2}\ln\frac{cn+a_{l}}{2}-\frac{cn+a_{l}}{2}+\frac{1}{2}\ln\frac{cn+a_{l}}{2}+\right.\\
        \left.+\frac{cn-a_{l}}{2}\ln\frac{cn-a_{l}}{2}-\frac{cn-a_{l}}{2}+\frac{1}{2}\ln\frac{cn-a_{l}}{2}+\ln|a_{l}|+O\left(\frac{1}{n}\right)\right].
      \end{multline}
      Collecting the terms and combining in the leading contributions $a_{l}$ with $2n$, we get
      \begin{multline}
        \label{eq:16}
        2\pi\exp\left[cn\ln n +
          (2n)\cdot\frac{1}{2}\left(\left(\frac{c}{2}+\frac{a_{l}}{2n}\right)\ln\left(\frac{c}{2}+\frac{a_{l}}{2n}\right)+\left(\frac{c}{2}-\frac{a_{l}}{2n}\right)\ln\left(\frac{c}{2}-\frac{a_{l}}{2n}\right)\right)-cn+\right.\\
        +\left.\frac{1}{2}\ln \left((cn)^{2}-a_{l}^{2}\right) -\ln 2 +\ln|a_{l}|+O\left(\frac{1}{n}\right)\right].
      \end{multline}
      Now we denote by $V_{0}(u)$ (one half of) the main contribution with $u=\frac{a_{l}}{2n}$:
      \begin{equation}
        \label{eq:17}
        V_{0}(u)=\frac{1}{4}\left[\left(\frac{c}{2}+u\right)\ln\left(\frac{c}{2}+u\right)+\left(\frac{c}{2}-u\right)\ln\left(\frac{c}{2}-u\right)\right],
      \end{equation}
      and by $e_{n}$ (one half of) the remainder
      \begin{equation}
        \label{eq:18}
        e_{n}(u)=\frac{1}{4}\ln\left((cn)^{2}-u^{2}\right)+\frac{1}{2}\ln |u|+\mathcal{O}\left(\frac{1}{n}\right).
      \end{equation}
      We also get $a_{l}$-independent contribution for each $l=1,\dots,n$
      \begin{equation}
        \label{eq:19}
        \tilde{C}_{n}=\pi\exp\left[cn\ln n-cn+\mathcal{O}\left(\frac{1}{n}\right)\right].
      \end{equation}

      Combining the exponential terms \eqref{eq:16} with the notations
      \eqref{eq:17},\eqref{eq:18} and $a_{l}$-independent contributions
      \eqref{eq:19}, \eqref{eq:10}, we arrive at the expression of the form 
          \begin{multline}
      \label{eq:4}
      \mu_{n}(\left\{a_{i}\right\})=\frac{1}{\tilde{Z}_{n}}\prod_{i<j;i,j=1}^{n}(a_{i}-a_{j})^{2}\cdot\prod_{i<j;i,j=1}^{n}(a_{i}+a_{j})^{2}\cdot \\\cdot\prod_{k}|a_{k}|
      \cdot \prod_{l=1}^{n}\exp\left\{ 2\left[-(2n)\cdot V_{0}\left(\frac{a_{l}}{2n}\right)-e_{n}\left(a_{l}\right)\right]\right\},
    \end{multline}
    where $\frac{1}{\tilde{Z}_{n}}=\frac{C_{n}}{(\tilde{C}_{n})^{n}}$.

      Let us now introduce the mirror images  $a_{-i}\equiv -a_{i}$ and use
      the equality
      \begin{equation}
        \label{eq:26}
        \prod_{i<j;i,j\neq 0; i,j=-n}^{n} |a_{i}-a_{j}|=\prod_{i<j;i,j=1}^{n}|a_{i}-a_{j}|\cdot\prod_{i=-n}^{-1}\prod_{j=1}^{n}|a_{i}-a_{j}|\cdot
        \prod_{i<j;i,j=-n}^{-1}|a_{i}-a_{j}|.
      \end{equation}
      The second factor can be expanded as 
      \begin{equation}
        \label{eq:27}
        \prod_{i=-n}^{-1}\prod_{j=1}^{n}|a_{i}-a_{j}|=2^{n}\prod_{j=1}^{n}|a_{j}|\cdot \prod_{i\neq j;i,j=1}^{n}|a_{i}+a_{j}|=2^{n}\prod_{j=1}^{n}|a_{j}|\cdot \prod_{i< j}|a_{i}+a_{j}|^{2},
      \end{equation}
      while the first and the third terms coincide. Thus we see that
      \begin{equation}
        \label{eq:28}
        \prod_{i<j;i,j\neq 0; i,j=-n}^{n} |a_{i}-a_{j}|=2^{n}\prod_{i<j;i,j=1}^{n}\left(a_{i}^{2}-a_{j}^{2}\right)^{2}\cdot \prod_{k=1}^{n}|a_{k}|.
      \end{equation}

      So we can write the
      measure for $l=-n,\dots,-1,1,\dots,n$ in the form \eqref{eq:13} with
      \begin{equation}
        \label{eq:20}
        \frac{1}{Z_{n}}=C_{n}(\tilde{C}_{n})^{-n}2^{-n}.
      \end{equation}
      Note that for $V_{0}''(u)\leq A\left(1+\frac{1}{|u-c/2|}+\frac{1}{|u+c/2|}\right)$ for
      $u\in[-c/2,c/2]$ and $e_{n}$ is uniformly bounded by $B\ln n$ for some constants
      $A,B$, since $|a_{l}|\leq cn$ for $l=-n,\dots,-1,1,\dots n$.
    \end{proof}
\end{lemma}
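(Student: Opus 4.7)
The strategy is to take the explicit formula \eqref{meas} and massage it into the Coulomb–gas / $\beta=2$ random-matrix-like form advertised in \eqref{eq:13}. Two independent manipulations are needed: (i) a Stirling expansion that turns the factorials into an exponential with a smooth potential $V_0$ plus a mild remainder $e_n$, and (ii) an algebraic reorganization of $\prod_{i<j}(a_i^2-a_j^2)^2 \cdot \prod_l a_l^2$ into a single Vandermonde-type product over the symmetrized index set $\{-n,\dots,-1,1,\dots,n\}$.

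First I would peel off every factor in \eqref{meas} that does not depend on any $a_l$: the ratios $(N+2k)!/2^{2k}$, the $2^{-n^2+2n-nN} n!/\prod_k (2k)!$, and ultimately the Stirling pre-exponential constants. These I would collect into a single $a$-independent constant, which will be absorbed into $1/Z_n$ at the end. This frees me to focus on the $a$-dependent part alone, which is the product over $l$ of $\bigl[\tfrac{N+a_l+2n-1}{2}\bigr]!\,\bigl[\tfrac{N-a_l+2n-1}{2}\bigr]!$ in the denominator, together with the polynomial factors.

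Next I would apply Stirling's formula $\ln m! = m\ln m - m + \tfrac12\ln(2\pi m) + O(1/m)$ to each of the two factorials. Using $c = (N+2n-1)/n$ and the rescaling $u = a_l/(2n)$, the leading $O(n)$ contribution collapses to exactly $2(2n)V_0(u)$ with $V_0$ as given, because the $cn\ln n - cn$ pieces cancel against the $a_l$-independent constants, and what remains is the convex combination $\tfrac{c/2 + u}{2}\ln(c/2+u) + \tfrac{c/2-u}{2}\ln(c/2-u)$. The subleading logarithmic terms $\tfrac12 \ln[(cn)^2 - a_l^2]$ and the stray $\ln |a_l|$ (one copy of which I would pull out of $\prod a_l^2$ to match the Vandermonde manipulation below) combine to give the remainder $e_n$; the $O(1/n)$ Stirling error is uniform since $|a_l|\le cn$. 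Checking $|V_0''(u)|\lesssim 1 + |u\pm c/2|^{-1}$ is a direct two-line computation, and $|e_n|\lesssim \ln n$ follows from $|a_l|\le cn$.

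The second ingredient is the symmetrization. I would introduce the mirror points $a_{-i} := -a_i$ and expand the full Vandermonde over the symmetric index set: write $\prod_{-n\le i<j\le n,\,i,j\ne 0}|a_i-a_j|$ as a product of three blocks $(i,j>0)$, $(i<0,j>0)$, $(i,j<0)$. The outer two blocks each equal $\prod_{1\le i<j\le n}|a_i-a_j|$, and the middle block factors as $2^n \prod_l |a_l| \cdot \prod_{i<j}|a_i+a_j|^2$. Multiplying these together gives exactly $2^n \prod_{i<j}(a_i^2 - a_j^2)^2 \cdot \prod_l |a_l|$, which matches the polynomial piece in \eqref{meas} after one copy of $|a_l|$ is absorbed into $e_n$ as noted above. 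Combining this identity with the Stirling step yields \eqref{eq:13}, and $Z_n$ absorbs all surviving $a$-independent factors together with the $2^{-n}$ from symmetrization.

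The only real bookkeeping obstacle is making sure the Stirling subleading terms, the factor $2^n$ from symmetrization, and the constants $C_n$, $\tilde C_n$ reconcile cleanly inside $Z_n$ and $e_n$; in particular the $\tfrac12\ln(\cdot)$ pieces from Stirling have to be split correctly between the prefactor and $e_n$. Everything else is straightforward manipulation, and the stated regularity bounds on $V_0$ and $e_n$ are immediate once the explicit expressions are in hand.
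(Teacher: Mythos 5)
Your proposal is correct and follows essentially the same route as the paper's own proof: extract the $a$-independent constants, apply Stirling to the two factorials with the rescaling $u=a_l/(2n)$ to produce $V_0$ and $e_n$, and symmetrize the Vandermonde-type product via the mirror points $a_{-i}=-a_i$ using the same three-block decomposition (with the middle block contributing $2^n\prod_l|a_l|\prod_{i<j}|a_i+a_j|^2$). The bookkeeping details you flag — splitting one copy of $|a_l|$ into $e_n$, reconciling the $2^{-n}$ and the Stirling prefactors inside $Z_n$ — are exactly the points the paper handles via its constants $C_n$ and $\tilde C_n$.
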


Using Lemma \ref{lemma:conv-empir-meas} we can rewrite the probability
of a highest weight $\lambda$ in the limit $N,n\to\infty, N\sim n$ and
of the corresponding diagram as the exponent of the functional of the
rotated diagram's boundary $f_{n}(x)$:
\begin{equation}
  \label{eq:63}
  \mu_{n}(\lambda)=\mu_{n,N}\left(\{a_{i}\}_{i=1}^{n}\right)=e^{-(2n)^{2}J[f_{n}]+\mathcal{O}(n\ln n)}.
\end{equation}
In order to write down the functional $J[f]$ explicitly, we need to
recall that the density $\rho_{n}(x)$ is closely related to the derivative
$f_{n}'(x)$. We can interpret $\rho_{n}$ as a density of middle points
of small intervals of length $\frac{1}{n}$ (or $\frac{1}{2n}$ for
$i=n$) where $f_{n}'(x)=-1.$ Note also, that in order to obtain the
expression \eqref{eq:13} we had to continue the function $\rho_{n}(x)$
to negative values of $x$ so that it becomes an even function. This
corresponds to the continuation of the boundary $f_{n}$ such that
$f_{n}'$ is even and $f_{n}$ is continuous at $x=0$. The continuation
is shown in the figure \ref{fig:young-rotated-and-continued}. 

\begin{figure}[h]
  \centering
  \includegraphics[width=10cm]{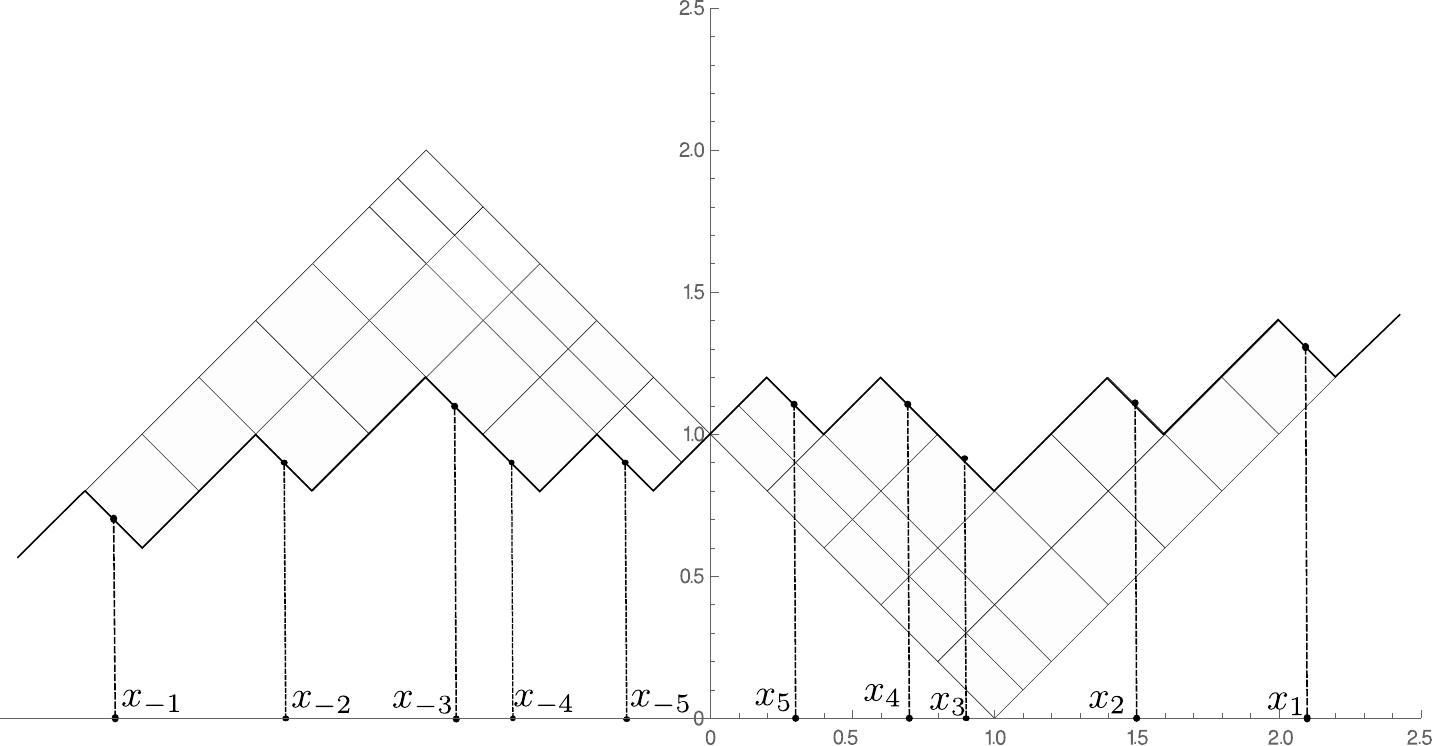}
  \caption{Rotated and scaled diagram for $n=5$ and its continuation to
    negative values of coordinate $x$. The function $f_{n}(x)$ is shown in solid black,
    the points $x_{i}=\frac{a_{i}}{2n}$ are the midpoints of intervals
    where $f_{n}'(x)=-1$. }
  \label{fig:young-rotated-and-continued}
\end{figure}

\begin{lemma}
  \label{lemma:func-form-of-prob-1}
  
  Consider a dominant integral weight $\lambda$ with the coordinates
  $\{a_{i}\}$, defined in Section \ref{sec:statements}. Then the
  probability measure $\mu_{n}(\lambda)$ can be approximated by the
  exponent of the  quadratic functional $J$ of $f_{n}$:
  \begin{multline}
    \label{eq:69}
    \mu_{n}(\lambda)    =e^{-(2n)^{2} J[f_{n}]+\mathcal{O}(n\ln n)}=\\=\exp\left(-(2n)^{2}\left[\frac{1}{2}\int_{-c/2}^{c/2}\int_{-c/2}^{c/2}
        \frac{1}{16}f_{n}'(x)f_{n}'(y)\ln|x-y|^{-1}\dx\;\dy+C\right]+\mathcal{O}(n\ln n)
    \right),
  \end{multline}
  where $f_{n}$ is an upper boundary of the diagram for $\lambda$,
  rotated and scaled as described in Section \ref{sec:statements}, and the constant $C$ is given by the formula
  \begin{equation}
    \label{eq:70}
    C=-\frac{1}{32}c^{2}\ln c+\frac{(c-2)^{2}}{16}\ln (c-2)+\frac{c-1}{4}\ln 2-\frac{3}{64}(c-4)^{2}.
  \end{equation}

\end{lemma}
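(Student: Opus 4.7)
My plan is to take the logarithm of the expression (\ref{eq:13}) supplied by Lemma~\ref{lemma:conv-empir-meas}, convert each sum into an integral against the empirical density $\rho_{n}$, and then rewrite the result in terms of $f_{n}'=1-4\rho_{n}$. The three contributions to $\ln\mu_{n}$ are
\begin{equation}
\ln\mu_{n}=\sum_{-n\le i<j\le n,\,i,j\ne 0}\ln|a_{i}-a_{j}|-\sum_{l}\Bigl[(2n)V_{0}\!\bigl(\tfrac{a_{l}}{2n}\bigr)+e_{n}(a_{l})\Bigr]-\ln Z_{n}.
\end{equation}
Writing $a_{l}=2n x_{l}$, the scaling factors $\ln(2n)$ collect into a term absorbed into $\ln Z_{n}$; the $e_{n}$--term is bounded by $C n \ln n$ by the estimate stated in Lemma~\ref{lemma:conv-empir-meas} and hence contributes only to the error $\mathcal{O}(n\ln n)$.

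Next I would replace sums by integrals using the interpretation of $\rho_{n}$ as a particle density: since each particle carries mass $1/(2n)$ against $\rho_{n}$, one has $\sum_{l}g(x_{l})=2n\int_{-c/2}^{c/2}g(x)\rho_{n}(x)\dx$ up to a Riemann--sum error that, by the bound $|V_{0}''(u)|\le C(1+|u\pm c/2|^{-1})$, is $\mathcal{O}(n\ln n)$ for $g=V_{0}$. For the Vandermonde part, excluding a diagonal strip of width $1/n$ converts the pair sum into
\begin{equation}
\sum_{i\ne j}\ln|x_{i}-x_{j}|=(2n)^{2}\iint_{-c/2}^{c/2}\rho_{n}(x)\rho_{n}(y)\ln|x-y|\dx\dy+\mathcal{O}(n\ln n),
\end{equation}
the error coming from the excluded diagonal and the $1/n$-variation of $\rho_{n}$. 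Combining these, and dividing by $(2n)^{2}$, I obtain
\begin{equation}
-(2n)^{-2}\ln\mu_{n}=-\tfrac{1}{2}\iint\rho_{n}(x)\rho_{n}(y)\ln|x-y|\dx\dy+\int V_{0}(x)\rho_{n}(x)\dx+\text{const}+\mathcal{O}\!\bigl(\tfrac{\ln n}{n}\bigr).
\end{equation}

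The decisive algebraic step is to check that this matches the quadratic form $J[f_{n}]$. Substituting $\rho_{n}=(1-f_{n}')/4$ into $\tfrac{1}{32}\iint f_{n}'(x)f_{n}'(y)\ln|x-y|^{-1}\dx\dy$ and expanding produces four terms; the $\rho_{n}\rho_{n}$ piece reproduces the double integral above, the constant piece reproduces $-\tfrac{1}{32}\iint\ln|x-y|\dx\dy$, and the cross terms $\tfrac{1}{4}\int\rho_{n}(x)\bigl(\int\ln|x-y|\dy\bigr)\dx$ collapse thanks to the identity
\begin{equation}
\int_{-c/2}^{c/2}\ln|x-y|\dy=(c/2-x)\ln(c/2-x)+(c/2+x)\ln(c/2+x)-c=4V_{0}(x)-c,
\end{equation}
which exactly recovers $\int V_{0}\rho_{n}\dx$ (up to a constant, using $\int\rho_{n}=1$).

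The last ingredient is the explicit value of $C$ in (\ref{eq:70}). The three sources of constants are: (i) $\ln Z_{n}$, obtained by evaluating $C_{n}$ and $(\tilde C_{n})^{-n}2^{-n}$ in (\ref{eq:20}) with Stirling on the factorials in (\ref{eq:10}); (ii) the bulk piece $\tfrac{1}{32}\iint_{-c/2}^{c/2}\ln|x-y|\dx\dy$, which is a standard elementary integral evaluating to an expression involving $c^{2}\ln c$; (iii) the $-c/4$ constant produced in the cross-term reduction above, together with the $cn\ln n - cn$ pieces from the Stirling expansion of $V_{0}$. Collecting these and matching powers of $c$, $\ln c$, $\ln(c-2)$ and $\ln 2$ yields the formula (\ref{eq:70}). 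I expect the main technical obstacle to be precisely this bookkeeping: ensuring that all $n$-dependent contributions from $Z_{n}$, from Stirling, and from the boundary/edge of the Riemann sums cancel in the large-$n$ limit, so that what remains at leading order is exactly the advertised constant $C$, with the residual fitting into the $\mathcal{O}(n\ln n)$ error term.
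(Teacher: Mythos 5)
Your proposal is correct and follows essentially the same route as the paper's proof: the same decomposition into Vandermonde, potential, and normalization terms, the same identity $\int_{-c/2}^{c/2}\ln|x-y|^{-1}\dy=c-4V_{0}(x)$ to absorb the cross terms after substituting $\rho_{n}=\frac{1}{4}(1-f_{n}')$, the same Riemann-sum error control near the diagonal, and the same Stirling analysis of $Z_{n}$ to extract $C$. The only difference is direction of presentation (you go from the measure to the functional, the paper goes from the functional to the measure), which is immaterial.
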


\begin{proof}
  We start with the functional and demonstrate that
  integrals are the approximations to the sums in the probability measure,
  written in the form \eqref{eq:13}.

  First, we need to show that an
  integral in the functional can be written as a sum of two integrals,
  one of which contains the potential $V_{0}(x)$:
  \begin{multline}
    \label{eq:71}
    \frac{1}{16}\int_{-c/2}^{c/2}\int_{-c/2}^{c/2}
    f_{n}'(x)f_{n}'(y)\ln|x-y|^{-1}\dx\;\dy+\left(\frac{c^{2}}{16}\ln c-\frac{3c^{2}}{32}+\frac{c}{2}\right)=\\
    = \int_{-c/2}^{c/2}\int_{-c/2}^{c/2}
        \frac{1}{4}(1-f_{n}'(x))\cdot\frac{1}{4}(1-f_{n}'(y))\ln|x-y|^{-1}\dx\;\dy+2\int_{-c/2}^{c/2}\frac{1}{4}(1-f_{n}'(x))V_{0}(x)\dx.
  \end{multline}
It is easily done by expanding the brackets in the double integral on
the right hand side and using the equalities 
\begin{equation}
  \label{eq:72}
  \int_{-c/2}^{c/2}\ln|x-y|^{-1}\dy=c-\left[\left(\frac{c}{2}+x\right)\ln\left(\frac{c}{2}+x\right)+\left(\frac{c}{2}-x\right)\ln\left(\frac{c}{2}-x\right)\right]=c-4V_{0}(x),
\end{equation}
\begin{equation}
  \label{eq:73}
  \int_{-c/2}^{c/2}\int_{-c/2}^{c/2}\ln|x-y|^{-1}\dx\;\dy=\frac{3}{2}c^{2}-c^{2}\ln c,
\end{equation}
and also the equation
\begin{equation}
  \label{eq:74}
  \int_{-c/2}^{c/2}f_{n}'(x)\dx=\int_{-c/2}^{c/2}(1-4\rho_{n}(x))\dx=c-4.
\end{equation}

Then we approximate the integrals on right-hand side of the equation
\eqref{eq:71} by the sums and demonstrate that these sums are the same
as in \eqref{eq:13}. To do so we split the intervals $\left(0,\frac{c}{2}\right)$ and
$\left(-\frac{c}{2},0\right)$ into small intervals of length
$\frac{1}{n}$ (for odd $a_{n}$ the first interval should be of length
$\frac{1}{2n}$):
\begin{multline}
  \label{eq:76}
  \frac{1}{16}\sum_{k=-\lfloor cn/2 \rfloor}^{\lfloor cn/2 \rfloor}
  \sum_{l=-\lfloor cn/2 \rfloor}^{\lfloor cn/2 \rfloor} \int_{k/n}^{(k+1)/n}
  \int_{l/n}^{(l+1)/n} (1-f_{n}'(x)) (1-f_{n}'(y))\ln|x-y|^{-1}\dx\,\dy+ \\ 
  +\frac{1}{2} \sum_{k=-\lfloor cn/2 \rfloor}^{\lfloor cn/2 \rfloor}
  \int_{k/n}^{(k+1)/n} (1-f_{n}'(x))V_{0}(x)\dx.
\end{multline}
On each interval $\left(\frac{k}{n},\frac{k+1}{n}\right)$ we have
$f_{n}'(x)=\pm 1$. Recall that the coordinates of midpoints of intervals, where
$f_{n}'(x)=-1$, are $\frac{a_{l}}{2n}$.

Approximating the double integral using the first order of Taylor
expansion on each small interval with $k\neq l$, we obtain the
contribution:
\begin{multline}
  \label{eq:75}
  \frac{1}{4n^{2}}\sum_{i=-n;i\neq 0}^{n}\sum_{j=-n;i\neq j; j\neq
    0}^{n}\ln\left|\frac{a_{i}}{2n}-\frac{a_{j}}{2n}\right|^{-1}=\\=\frac{1}{4n^{2}}\sum_{i=-n;i\neq
    0}^{n}\sum_{j=-n;i\neq j; j\neq
    0}^{n}\ln\left|a_{i}-a_{j}\right|^{-1}+\ln |2n|.
\end{multline}
To estimate the correction to the sum we need to consider the
difference of the integral over one of the squares, where
$f_{n}'(x)=-1$ and the corresponding term in the sum:
\begin{equation}
  \label{eq:46}
  E_{ij}=\frac{1}{16}\int_{k/n}^{(k+1)/n}
  \int_{l/n}^{(l+1)/n} (1-f_{n}'(x))
  (1-f_{n}'(y))\ln|x-y|^{-1}\dx\,\dy-\frac{1}{4n^{2}}
  \ln\left|\frac{a_{i}}{2n}-\frac{a_{j}}{2n}\right|^{-1}.
\end{equation}
Denote $\frac{a_{i}}{2n},\frac{a_{j}}{2n}$ by $x_{i}, y_{i}$
correspondingly, then the difference is rewritten as
\begin{equation}
  \label{eq:98}
  E_{ij}=\frac{1}{4}\int_{x_{i}-1/(2n)}^{x_{i}+1/(2n)}\int_{y_{j}-1/(2n)}^{y_{j}+1/(2n)} \left(\ln|x-y|^{-1}-\ln|x_{i}-y_{j}|^{-1}\right)\dx\,\dy
\end{equation}
Changing the variables to $\tilde{x}=x-x_{i},\; \tilde{y}=y-y_{j}$, we
arrive at the integral
\begin{equation}
  \label{eq:99}
  E_{ij}=\frac{1}{4}\int_{-1/(2n)}^{1/(2n)}\int_{-1/(2n)}^{1/(2n)} \ln\left|1+\frac{\tilde{x}-\tilde{y}}{x_{i}-y_{j}}\right|^{-1}\mathrm{d\tilde{x}}\,\mathrm{d\tilde{y}}
\end{equation}
Since $|\ln(1+t)|<2|t|$ for
$t\in\left[-\frac{1}{2},\frac{1}{2}\right]$, this integral is
estimated as\linebreak
${|E_{ij}|<\frac{1}{4n^{2}}\frac{1}{2n|x_{i}-y_{j}|}=\frac{1}{4n^{2}}\frac{1}{|a_{i}-a_{j}|}}$. Thus in the sum
$\sum_{i}\sum_{j}E_{ij}$ we have at most $2n$ terms with $|i-j|=1$ that are estimated
by $\frac{1}{4n^{2}}\frac{1}{1}$, at most $2n$ terms with $|i-j|=2$ that are estimated
by $\frac{1}{4n^{2}}\frac{1}{2}$, then we have $2n$ terms that are estimated by
$\frac{1}{4n^{2}}\frac{1}{3}$ and so on. The total sum is of the order $(2n)^{2}\sum_{i,j}E_{ij}=\mathcal{O}(n\ln n)$.

For contributions with $k=l$ we use the integral
\begin{equation}
  \label{eq:77}
  \frac{1}{4}\int_{k/n}^{(k+1)/n} \int_{k/n}^{(k+1)/n}\ln|x-y|^{-1}\dx\,\dy=\frac{1}{n^{2}} \ln n+\frac{3}{2 n^2},
\end{equation}
so the total contribution of $n$ such diagonal terms with a factor $(2n)^{2}$ in the exponent of the
expression \eqref{eq:69} is of order
$\mathcal{O}(n\ln n)$. 

We approximate the integral with $V_{0}(x)$ in a similar way and obtain
\begin{equation}
  \label{eq:78}
  \frac{1}{2} \sum_{k=-\lfloor cn/2 \rfloor}^{\lfloor cn/2 \rfloor}
  \int_{k/n}^{(k+1)/n} (1-f_{n}'(x))V_{0}(x)\dx=\sum_{i=-2n,i\neq 0}^{2n}\frac{1}{n}V_{0}\left(\frac{a_{i}}{2n}\right)+\mathcal{O}\left(\frac{1}{n^{2}}\right).
\end{equation}
We substitute expressions \eqref{eq:78},\eqref{eq:77},\eqref{eq:75} into
\eqref{eq:69} and see that we get a leading $a_{i}$-dependent part of the
expression \eqref{eq:13} correctly. But we need also to compute the
$a_{i}$-independent part and compare it to the asymptotic behavior of $Z_{n}$
in $n$.  We use the formulas \eqref{eq:19},\eqref{eq:10},\eqref{eq:20}
to derive the asymptotic behavior of $Z_{n}$:  
\begin{equation}
  \label{eq:79}
  \frac{1}{Z_{n}}=\exp\left(-n\ln 2-n\ln \tilde{C}_{n}+\ln
    C_{n}\right)=\exp\left(-n\ln 2-n\ln \pi-cn^{2}\ln n+cn^{2}+\ln C_{n}\right).
\end{equation}
For $\ln C_{n}$ we have
\begin{equation}
  \label{eq:80}
  \ln C_{n}=\left[-cn^{2}+\mathcal{O}(n)\right]\ln 2+\ln (n!)-\sum_{k=1}^{n}\ln
((2k)!) + \sum_{k=1}^{n} \ln\left[(N+2k-2)!\right].
\end{equation}
To estimate factorials it is convenient to combine them in the following way:
\begin{multline}
  \label{eq:81}
  \ln\left[\prod_{k=0}^{n-1}\frac{\left(N+2k\right)!}{(2n-2k)!}\right]=\sum_{k=0}^{n-1}\sum_{j=1}^{N-2n+4k-1}\ln(2n-2k+j)=\\
  =\sum_{k=0}^{n-1}\sum_{j=1}^{N-2n+4k-1}\ln
  2n +\sum_{k=0}^{n-1}\sum_{j=1}^{(c-4)n+4k-1} \ln\left(1-\frac{k}{n}+\frac{j}{2n}\right).
\end{multline}
The first double sum gives us $\sum_{k=0}^{n-1}\sum_{j=1}^{N-2n+4k-1}\ln
  2n=(c-2)n^{2}\ln 2n-3n\ln 2n+\mathcal{O}(n)$, the second sum is a
Riemann sum for an integral
\begin{multline}
  \label{eq:82}
n^{2}\int_{0}^{1}\dx\int_{0}^{(c-4)+4x}\dy\ln\left(1-x+\frac{y}{2}\right)=\\=-n^{2}\left[\frac{1}{4}
c^2 \ln (c-2)-\frac{1}{4} c^2 \ln c+\frac{3 }{2}c-c \ln (c-2)+c \ln 2+\ln
(c-2)-3-\ln 2\right].
\end{multline}
Combining these results we can write down leading contributions to
$\ln Z_{n}$ in $n$. We preserve the terms of the order $n^{2}\ln n$
from expressions \eqref{eq:81}, \eqref{eq:79} and of order $n^{2}$
from formulas \eqref{eq:79}, \eqref{eq:80}, \eqref{eq:82}:
\begin{multline}
  \label{eq:9}
  -\ln Z_{n}=-cn^{2}\ln n+cn^{2} +(c-2)n^{2}\ln n
  +(c-2)n^{2}\ln 2-cn^{2}\ln 2\\
  -n^{2}\left[\frac{1}{4}
c^2 \ln (c-2)-\frac{1}{4} c^2 \ln c+\frac{3 }{2}c-c \ln (c-2)+c \ln 2+\ln
(c-2)-3-\ln 2\right]+\mathcal{O}(n\ln n)
\end{multline}
Let us now combine the contributions of the order $n^{2}\ln n$ from
$\ln Z_{n}$ and from the equation \eqref{eq:75}. Substituting the
equation \eqref{eq:75} into \eqref{eq:69}, we get
$\frac{1}{2}(2n)^{2}\ln |2n|=2n^{2}\ln n+2n^{2}\ln 2$. Thus we see
that terms of the order $n^{2}\ln n$ are cancelled in the equation
\eqref{eq:69}.

Now combining the contributions of order $n^{2}$ from
\eqref{eq:71},\eqref{eq:9},\eqref{eq:75}, we obtain the
expression for a constant $C$:
\begin{multline}
  \label{eq:83}
  C=\frac{1}{32}c^{2}\ln c-\frac{3}{64}c^{2}+\frac{1}{4}c-\frac{1}{2}\ln 2+\\
  \frac{1}{2}\ln 2 -\frac{c}{4}-\frac{1}{16}c^{2}\ln c
  +\frac{1}{16}c^{2}\ln(c-2)-\frac{1}{4}(c-1)\ln(c-2)+\frac{c}{4}\ln 2 -\frac{1}{4}\ln 2
  -\frac{3}{4}+\frac{3}{8}c=\\
  =-\frac{1}{32}c^{2}\ln c+\frac{(c-2)^{2}}{16}\ln (c-2)+\frac{c-1}{4}\ln 2-\frac{3}{64}(c-4)^{2}
\end{multline}
Substituting the equations \eqref{eq:78},\eqref{eq:75} into
\eqref{eq:71}, and combining it with the expression for $C$, we see
that equation \eqref{eq:69} indeed reproduces the
probability measure \eqref{eq:13} in the leading order in $n$. 
\end{proof}

\subsection{Minimizer of the functional}
\label{sec:minimizer-functional}

The functional $J[f]$ is clearly quadratic. Rewriting the functional
$J[f]$ in terms of densities and searching for its minimum, we arrive
at the following variational problem. The particle density $\rho(x)$
that is related to the limit shape $f(x)$ by $f'(x)=1-4\rho(x)$
\eqref{eq:61}, is the minimizer of the functional
  \begin{multline}
    \label{eq:7}
    \frac{1}{2}\int_{-\frac{c}{2}}^{\frac{c}{2}}\int_{-\frac{c}{2}}^{\frac{c}{2}}
    \rho(x)\rho(y)\ln|x-y|^{-1}\dx\,\dy + \\
    +\frac{1}{4}\int_{-\frac{c}{2}}^{\frac{c}{2}} \rho(x)
    \left[\left(\frac{c}{2}+x\right)\ln\left(\frac{c}{2}+x\right)+\left(\frac{c}{2}-x\right)\ln\left(\frac{c}{2}-x\right)\right]\dx.
  \end{multline}

  We first recall necessary and sufficient conditions for the
  minimizer (Proposition \ref{theorem:deift-min-suff-cond}). Our
  functional is strictly convex (see \cite{deift1999orthogonal},
  Theorem 6.27), therefore the minimizer is unique.
  \begin{prop}[\cite{deift1999orthogonal}, Thm 6.132]
    \label{theorem:deift-min-suff-cond}
    Suppose $\rho(x)$ is a continuous function on $[-c/2, c/2]$. Then
    $\rho(x)$ is the minimizer of \eqref{eq:7} if and only if there
    exists a constant $\ell\in\mathbb{R}$ such that
  \begin{enumerate}[(I)]
  \item
    \begin{equation*}
      \int_{-\frac{c}{2}}^{\frac{c}{2}}\ln|x-y|^{-1}\rho(y)\dy +V_{0}(x) \ge \ell\quad \text{for any}\quad x\in \left[-\frac{c}{2}, \frac{c}{2}\right],
    \end{equation*}
  \item
    \begin{equation*}
      \int_{-\frac{c}{2}}^{\frac{c}{2}}\ln|x-y|^{-1}\rho(y)\dy +V_{0}(x) = \ell \quad\text{for}\quad x\in\supp\rho.
    \end{equation*}
  \end{enumerate}
  \end{prop}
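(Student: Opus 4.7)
The plan is to follow the classical potential-theoretic argument (as in \cite{deift1999orthogonal}): I split the energy difference between a candidate $\rho$ satisfying (I), (II) and any admissible competitor $\sigma$ into a linear Lagrangian piece and a quadratic piece, and show each is non-negative. I take the minimization to be over non-negative continuous functions on $[-c/2,c/2]$ with $\int \rho\,\dx = 1$, the normalization being forced by \eqref{eq:74}; the constant $\ell$ in (I)--(II) plays the role of the Lagrange multiplier for this unit-mass constraint.

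Introduce $U_\rho(x) := \int_{-c/2}^{c/2}\ln|x-y|^{-1}\rho(y)\,\dy$ and $F_\rho(x) := U_\rho(x) + V_0(x)$, so that conditions (I) and (II) read $F_\rho \ge \ell$ on $[-c/2,c/2]$ and $F_\rho = \ell$ on $\supp\rho$. A direct algebraic expansion using symmetry of the kernel gives
\begin{equation*}
    E[\sigma]-E[\rho] = \frac{1}{2}\int\int \ln|x-y|^{-1}(\sigma-\rho)(x)(\sigma-\rho)(y)\,\dx\,\dy + \int (F_\rho(x)-\ell)(\sigma(x)-\rho(x))\,\dx,
\end{equation*}
where I inserted $-\ell(\sigma-\rho)$, which integrates to zero since both densities have unit mass.

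For sufficiency the linear term splits as $\int(F_\rho-\ell)\sigma\,\dx - \int(F_\rho-\ell)\rho\,\dx$: the second integral vanishes by (II), and the first is non-negative since $F_\rho-\ell \ge 0$ by (I) and $\sigma \ge 0$. The quadratic term is non-negative by \emph{conditional positive definiteness} of the logarithmic kernel on zero-mass signed measures, so $E[\sigma] \ge E[\rho]$. For necessity I run a first-variation argument: for every smooth $\eta$ with $\int \eta\,\dx = 0$ supported compactly in the interior of $\supp \rho$, the perturbation $\rho+t\eta$ stays admissible for small $|t|$, and vanishing of the first variation forces $\int F_\rho\,\eta\,\dx = 0$; varying $\eta$ yields $F_\rho \equiv \ell$ on $\supp\rho$, which is (II). To obtain (I) at a point $x_0 \notin \supp\rho$, I use a one-sided variation $\eta = \eta_+ - \eta_-$ with $\eta_+ \ge 0$ concentrated near $x_0$ and $\eta_- \ge 0$ of equal total mass supported on $\supp\rho$; admissibility requires $t \ge 0$, and non-negativity of the right-derivative of $E[\rho+t\eta]$ at $t=0$ gives $F_\rho(x_0) \ge \ell$.

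The central obstacle is justifying conditional positivity of the logarithmic energy, i.e.\ $\int\int \ln|x-y|^{-1}\,\mathrm{d}\nu(x)\,\mathrm{d}\nu(y) \ge 0$ for every signed measure $\nu$ on $[-c/2,c/2]$ with $\int \mathrm{d}\nu = 0$, with equality only for $\nu = 0$. This simultaneously yields $E[\sigma] \ge E[\rho]$ and the uniqueness of the minimizer referenced just above the proposition. The standard route is a Fourier representation of $-\ln|x-y|$ combined with Parseval's identity; continuity and compact support of $\sigma,\rho$ provide enough regularity, so the argument goes through without additional technical work beyond what is already available in \cite{deift1999orthogonal}.
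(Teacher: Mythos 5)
The paper does not actually prove this proposition---it is quoted directly from Deift's book (Theorem 6.132)---and your argument is precisely the standard potential-theoretic proof given in that reference: the decomposition of $E[\sigma]-E[\rho]$ into a quadratic term plus $\int (F_\rho-\ell)(\sigma-\rho)\,\dx$, non-negativity of the linear term via (I)--(II), conditional positive definiteness of the logarithmic kernel on zero-mass signed measures, and a first-variation argument for necessity; the proposal is correct. The only point to tidy is in the necessity step: two-sided perturbations $\rho+t\eta$ should be supported in a set of the form $\{\rho>\delta\}$ rather than merely in the interior of $\supp\rho$ (where $\rho$ may still vanish), with (II) then extended to all of $\supp\rho$ by continuity of the logarithmic potential of a bounded density.
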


  We construct the minimizer by an explicit integral formula, which is
  obtained as a solution of Riemann-Hilbert problem, as described in
  the book by P.Deift \cite{deift1999orthogonal}. This solution is
  presented in Lemma \ref{lemma:rho-for-c-greater-4} below, where we
  also check the necessary and sufficient conditions.
  \begin{lemma}
    \label{lemma:rho-for-c-greater-4}
  For $c\ge 4$
  \begin{equation}
    \label{eq:50}
  \rho(x)=\frac{1}{\pi^{2}}\Re\left[\sqrt{2c-4-x^2}\int_{-\sqrt{2c-4}}^{\sqrt{2c-4}}\frac{\frac{1}{4}\left(\ln\left(\frac{c}{2}+s\right)-\ln\left(\frac{c}{2}-s\right)\right)}{\sqrt{2c-4-s^{2}}(s-x)}\mathrm{ds}\right],
\end{equation}
is the minimizer of the functional \eqref{eq:7}. 
\end{lemma}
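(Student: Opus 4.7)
The plan is to recast the variational problem as a scalar Riemann--Hilbert problem for the equilibrium measure $\rho$ with external field $V_{0}$, following Deift \cite{deift1999orthogonal}. Differentiating the equality in condition (II) of Proposition \ref{theorem:deift-min-suff-cond} with respect to $x$ turns the Euler--Lagrange condition into the singular integral equation
\begin{equation*}
\mathrm{P.V.}\!\int \frac{\rho(y)}{x-y}\,\dy \;=\; V_{0}'(x) \;=\; \frac{1}{4}\ln\frac{c/2+x}{c/2-x}, \qquad x \in \supp\rho.
\end{equation*}
First I would guess, and afterwards verify, that $\supp\rho = [-a,a]$ with $a=\sqrt{2c-4}$. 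The inequality $a\le c/2$ reduces to $(c-4)^{2}\ge 0$, so for $c\ge 4$ the support lies inside the interval $[-c/2,c/2]$ on which $V_{0}$ is defined.

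Next, I would apply the classical inversion formula for the finite Hilbert transform on $[-a,a]$: the unique solution that vanishes at both endpoints has the form
\begin{equation*}
\rho(x) \;=\; \frac{1}{\pi^{2}}\sqrt{a^{2}-x^{2}}\;\mathrm{P.V.}\!\int_{-a}^{a}\frac{V_{0}'(s)}{\sqrt{a^{2}-s^{2}}\,(s-x)}\,\ds,
\end{equation*}
which is exactly formula \eqref{eq:50} once $V_{0}'$ is substituted and the real part is taken (this accommodates the branch of the outer square root, since for $x\in[-a,a]$ the Cauchy kernel is understood via its $i0$-shifted boundary value). The particular value $a=\sqrt{2c-4}$ is forced by the normalization $\int_{-c/2}^{c/2}\rho(x)\dx=1$, which follows from \eqref{eq:74} combined with $f'(x)=1-4\rho(x)$. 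I would check the normalization by deforming the $s$-contour into the complex plane and collecting the contributions from the logarithmic branch points of $V_{0}'$ at $s=\pm c/2$.

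It then remains to verify the two conditions of Proposition \ref{theorem:deift-min-suff-cond}. Condition (II) is obtained by integrating the singular integral equation back and fixing the integration constant $\ell$. For condition (I), I would study the effective potential
\begin{equation*}
U(x)\;=\;\int_{-a}^{a}\ln|x-y|^{-1}\rho(y)\,\dy + V_{0}(x),\qquad x\in\left[-\tfrac{c}{2},\tfrac{c}{2}\right],
\end{equation*}
and argue by monotonicity: $U'(x)>0$ for $x\in(a,c/2]$ and $U'(x)<0$ for $x\in[-c/2,-a)$; together with the boundary values $U(\pm a)=\ell$ coming from (II), this yields $U(x)\ge \ell$ on $[-c/2,c/2]$. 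Non-negativity of $\rho$ on $[-a,a]$ becomes transparent once the integral \eqref{eq:50} is reduced to the arctangent form \eqref{eq:68}, since the two arctangents stay in $(-\pi/2,\pi/2)$ and sum to a non-negative quantity for $c\ge 4$.

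The main obstacle I expect is the inequality in condition (I) outside the support. The inversion formula delivers (II) almost by construction, but the outer inequality rests on a careful sign analysis of $U'$ on $(a,c/2]$, where $V_{0}'(x)$ blows up at $x=\pm c/2$; controlling this without spurious contributions from the branch points of $V_{0}'$, and matching the resulting expression to the value of $\ell$ implicitly determined on the support, is the delicate step that will require the bulk of the work.
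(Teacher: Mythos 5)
Your construction of $\rho$ is essentially the paper's: the finite-Hilbert-transform inversion formula on $[-a,a]$ in the class of solutions vanishing at the endpoints is exactly what the paper obtains by passing to $G(z)=-i\int\frac{\rho(y)}{y-z}\dy$, normalizing by $\sqrt{z^{2}-a^{2}}$ and applying the Plemelj formula; the solvability condition $\int_{-a}^{a}V_{0}'(s)(a^{2}-s^{2})^{-1/2}\ds=0$ holds by oddness of $V_{0}'$, and fixing the total mass to $1$ yields the same equation $\frac{c}{4}\bigl[1-\sqrt{1-(2a/c)^{2}}\bigr]=1$ and hence $a=\sqrt{2c-4}$ (note this equation, not the inequality $a\le c/2$, is where $c\ge 4$ actually enters: its left-hand side is at most $c/4$). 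Condition (II) then holds by construction, as you say.

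The one genuine gap is the step you yourself flag as ``the bulk of the work'': you assert $U'(x)>0$ on $(a,c/2]$ but give no mechanism for proving it, and without that the verification of condition (I) is missing. The paper's device here is concrete and worth knowing: expand $V_{0}'(s)=\sum_{m\ge1}\frac{2^{2(m-1)}}{(2m-1)c^{2m-1}}s^{2m-1}$, a series with strictly positive coefficients, write $G$ as a contour integral around $[-a,a]$, and evaluate each monomial by residues at $z$ and $\infty$. This yields $G(z)=iV_{0}'(z)-i\sqrt{z^{2}-a^{2}}\,h(z)$ with $h$ analytic and having strictly positive Taylor coefficients, whence $U'(x)=V_{0}'(x)-\int\frac{\rho(y)}{x-y}\dy=\sqrt{x^{2}-a^{2}}\,h(x)>0$ for $x>a$, which is precisely your monotonicity claim; as a byproduct it also gives $\rho(x)=\sqrt{a^{2}-x^{2}}\,h(x)\ge0$ on the support without appealing to the arctangent form \eqref{eq:68}. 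If you supply this (or an equivalent) argument, your proof closes; as written, the decisive inequality is announced rather than proved.
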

\begin{proof}
  We want to find a minimizer of \eqref{eq:7}. We assume that the
  minimizer is a continuous density $\rho(x)$ and $\rho(x)=\rho(-x)$
  due to condition \eqref{eq:12}. Thus we can assume that
  $\mathrm{supp}\;\rho=[-a,a]$ with $a<\frac{c}{2}$. Additionally,
  since total number of particles is $2n$, we have
  $\int_{-c}^{c}\rho_{n}(x)=1$, so the function $\rho$ should satisfy
  the condition
  \begin{equation}
    \label{eq:25}
    \int_{-a}^{a}\rho(x)\;\dx=1.
  \end{equation}

 Then we have a variational problem with the convex functional $\mathcal{E}[\rho]$:
  \begin{equation}
    \label{eq:22}
    \mathcal{E}[\rho]=\int_{-a}^{a}\int_{-a}^{a}\left(-\frac{1}{2}\ln|x-y|+\frac{1}{2}V_{0}(x)+\frac{1}{2}V_{0}(y)\right)\rho(x)\rho(y)\;\dx\;\dy.
  \end{equation}

  Taking variation by $\rho$, we obtain Euler-Lagrange equation for $x\in\supp\rho$:
  \begin{equation}
    \label{eq:23}
    \int_{-a}^{a}\ln|x-y|^{-1}\rho(y)\dy +V_{0}(x)=\mathrm{const}. 
  \end{equation}
  This equation has a natural electrostatic interpretation, since $\ln|x-y|^{-1}$ is a two-dimensional
  electrostatic potential at the point $x$ of a unit charge at $y$. Taking the
  derivative with respect to $x$ of the equation \eqref{eq:23}, we get the equilibrium
  condition for the charge distribution with density $\rho$ in an external field with
  the potential $V_{0}$:
  \begin{equation}
    \label{eq:24}
    -\int_{-a}^{a}\frac{\rho(y)\;\dy}{y-x} +V_{0}'(x)=0.
  \end{equation}

  The charge distribution $\rho(x)$ creates a potential $\varphi(z)$
  on a complex plane with a cut from $-a$ to $a$ along the real axis
  $\mathbb{C}\setminus [-a,a]$. The potential $\varphi(z)$ satisfies
  Laplace equation and is a harmonic function. Its gradient is called
  field strength and thus its components along the real and the
  imaginary axes are analytic functions on
  $\mathbb{C}\setminus [-a,a]$. Denote by $G(z)$ the field strength
  component along the imaginary axis. Then
  \begin{equation}
    \label{eq:29}
     G(z)=-i\int_{-a}^{a}\frac{\rho(y)}{y-z}\dy,
  \end{equation}
  is a Hilbert transform of $\rho(x)$.
We know that  $G(z)$ is analytic on $\mathbb{C}\setminus
[-a,a]$ and its limit values are
\begin{multline}
  \label{eq:30}
  G_{\pm}(x)=
  \lim_{\varepsilon\to 0}\frac{1}{i}\int\frac{\rho(y)\dy}{y-(x\pm i\varepsilon)}=
  \lim_{\varepsilon\to 0}\frac{1}{i}\int\frac{y-x\pm i\varepsilon}{(y-x)^2+\varepsilon}\rho(y)\dy=\\
  =-i\mathrm{p.v.}\int\frac{\rho(y)\dy}{y-x}\pm\pi\rho(x),
\end{multline}
where we have used
$\frac{\varepsilon}{\pi(x^2+\varepsilon^2)}\to\delta(x)$.
Thus we arrive at
\begin{equation}
\label{eq:31}
  G_{\pm}(x)=\pm\pi\rho(x)+iV_{0}'(x),
\end{equation}
so on the support of $\rho(x)$ we have
\begin{equation}
  \label{eq:32}
  G_+(x)+G_-(x)=2iV_{0}'(x),\quad x\in[-a,a],
\end{equation}
and outside of $[-a,a]$ the following conditions appear
\begin{equation}
  \label{eq:33}
  \begin{array}{l}
    G_+(x)-G_-(x)=0,\quad x\not\in[-a,a],\\
    G(z)\to 0 ,\; z\to\infty.
  \end{array}    
\end{equation}

This is Riemann-Hilbert problem for $G(z)$, but the equation
(\ref{eq:32}) is in a non-standard form with the sum instead of a
difference. We need to redefine $G$ in such a way  as to obtain a
standard problem that can be solved by the Plemelj formula
\cite{deift1999orthogonal}:
\begin{equation}
\label{eq:34}
  \tilde{G}(z)=\frac{G(z)}{\sqrt{z^2-a^2}}.
\end{equation}
Then we get
\begin{multline}
  \tilde{G}_+(x)-\tilde{G}_-(x)=\frac{G_+(z)}{\left(\sqrt{x^2-a^2}\right)_+}-\frac{G_-(z)}{\left(\sqrt{x^2-a^2}\right)_-}=\\
  =\frac{G_+(z)+G_-(z)}{\left(\sqrt{x^2-a^2}\right)_+}=\frac{2iV_{0}'(x)}{\left(\sqrt{x^2-a^2}\right)_+},
\end{multline}
where the branch of the square root changes the sign crossing the real
line
\begin{equation}
  \label{eq:35}
\left(\sqrt{x^2-a^2}\right)_+=-\left(\sqrt{x^2-a^2}\right)_-,\quad x\in[-a,a].
\end{equation}
The condition \eqref{eq:33} is preserved for $\tilde{G}$:
\begin{equation}
  \label{eq:36}
  \begin{array}{l}
  \tilde{G}_+(z)-\tilde{G}_-(z)=0,\quad z\not\in[-a,a],\\
    \tilde{G}(z)\to 0,\quad z\to\infty.
  \end{array}    
\end{equation}

Then $\tilde{G}(z)$ is a solution of the standard Riemann-Hilbert
problem and is given by the Plemelj formula
\begin{eqnarray}
\label{eq:37}
  \tilde{G}(z)=\frac{1}{2\pi
  i}\int_{-a}^a\frac{2
  iV_{0}'(s)\mathrm{ds}}{\left(\sqrt{s^2-a^2}\right)_+(s-z)},\\
  G(z)=\frac{\sqrt{z^2-a^2}}{
  \pi }\int_{-a}^a\frac{V_{0}'(s)\mathrm{ds}}{\left(\sqrt{s^2-a^2}\right)_+(s-z)}.
\end{eqnarray}
To find the support of $\rho$ we need to consider the asymptotics of
$G(z)$ as $z\to\infty$. We expand the above
expression into series:
\begin{equation}
  \label{eq:38}
  G(z)=\frac{z+\dots}{
    \pi}\left(-\frac{1}{z}\right)\int_{-a}^a\frac{V_{0}'(s)}{\left(\sqrt{s^2-a^2}\right)_+}\left(1+\frac{s}{z}+\dots\right)\mathrm{ds}.
\end{equation}
Consider the first term in the series, for $G(z)\to 0, z\to\infty$ we need to have
\begin{equation}
\label{eq:39}
  \int_{-a}^a\frac{V_{0}'(s)}{\left(\sqrt{s^2-a^2}\right)_+}\mathrm{ds}=0.
\end{equation}
This condition is automatically satisfied, since $V_{0}(x)$ is an even
function and $V_{0}'(s)$ is an odd function. At the same time 
\begin{equation}
\label{eq:40}
  G(z)=-i\int\frac{\rho(y)\dy}{y-z}\simeq\frac{i}{z}\int\rho(y)\dy+{\cal O}\left(\frac{1}{z^2}\right),
\end{equation}
and comparing it to the second term in the series \eqref{eq:38} we
arrive at
\begin{equation}
\label{eq:41}
  -\frac{1}{
    \pi}\int_{-a}^{a}\frac{V_{0}'(s)s}{\left(\sqrt{s^2-a^2}\right)_+z}=\frac{i}{z}.
\end{equation}
Taking the derivative of the equation \eqref{eq:17} and
substituting to the above equation, we get
\begin{equation}
  \label{eq:43}
  \frac{1}{4}\int_{-a}^{a}\frac{s}{\sqrt{s^2-a^2}}\cdot\frac{-1}{\pi}\ln\left|\frac{s+c/2}{s-c/2}\right|\mathrm{ds}=i.
\end{equation}
By taking a derivative we can check that
\begin{multline}
  \label{eq:65}
  \int\frac{s}{\sqrt{s^2-a^2}}\ln\left|\frac{s+c/2}{s-c/2}\right|\ds=
  \frac{1}{2} \left(\left(2 \sqrt{s^2-a^2}-\sqrt{c^2-4 a^2}\right) \log (c-2 s)+\right.\\\left.+\left(\sqrt{c^2-4 a^2}-2 \sqrt{s^2-a^2}\right) \log (c+2
   s)-\sqrt{c^2-4 a^2} \log \left(\sqrt{c^2-4 a^2} \sqrt{s^2-a^2}-2 a^2-c s\right)+\right.\\\left.+\sqrt{c^2-4 a^2} \log \left(\sqrt{c^2-4 a^2}
   \sqrt{s^2-a^2}-2 a^2+c s\right)-2 c \log \left(\sqrt{s^2-a^2}+s\right)\right)+\mathrm{const}.
\end{multline}
Substituting the integration limits we obtain the equation
\begin{equation}
  \label{eq:47}
  \frac{c}{4}\left[1-\sqrt{1-\left(\frac{2a}{c}\right)^{2}}\right]=1,
\end{equation}
which can be solved for $c\geq 4$ and we get
\begin{equation}
  \label{eq:48}
  a=\sqrt{2c-4}.
\end{equation}
We see that indeed $a<\frac{c}{2}$ and the solution  $\rho$ of the
variational problem \eqref{eq:7} is given
by the formula
\begin{equation}
 \rho(x)=\frac{1}{\pi}\Re[ G_+(x)]=\frac{1}{\pi^{2}}\Re\left[\sqrt{x^2-2c+4}\int_{-\sqrt{2c-4}}^{\sqrt{2c-4}}\frac{\frac{1}{4}\left(\ln\left(\frac{c}{2}+s\right)-\ln\left(\frac{c}{2}-s\right)\right)}{\left(\sqrt{s^2-2c+4}\right)_+(s-x)}\mathrm{ds}\right].
\end{equation}
By choosing the proper branch of the square roots we obtain formula
\eqref{eq:50}. 

We constructed $\rho(x)$ in such a way that that minimization
condition (II) of the Proposition \ref{theorem:deift-min-suff-cond} is
satisfied. Using the condition (II) for we rewrite the  condition (I) for $x>a$ as
\begin{equation}
  \label{eq:106}
  \int_{a}^{x}(\rho(y)\ln|x-y|-V_{0}(y)) \dy \leq 0.
\end{equation}

To check this condition, we first rewrite the integral expression
\eqref{eq:37} for $G(z)$ as a contour integral
\begin{equation}
  \label{eq:100}
  G(z)=\frac{\sqrt{z^{2}-a^{2}}}{2\pi} \int_{\mathcal{C}} \frac{V_{0}'(s)\ds }{\left(\sqrt{s^{2}-a^{2}}\right)(s-z)}, 
\end{equation}
where $\mathcal{C}$ is a clockwise contour with $[-a,a]$ in
its interior, that lies in the domain of analyticity of $V_{0}$. To
compute this integral we use the power series with the positive
coefficients
\begin{equation}
  V_{0}'(s)=\sum_{m=1}^{\infty}\frac{2^{2(m-1)}}{(2m-1)c^{2m-1}} s^{2m-1}.
\end{equation}
We consider the contour $\mathcal{C}$ as a contour around the points $z$
and $\infty$ and compute the integral of $s^{2m-1}$ by the residue calculation as in
\cite{deift1999orthogonal} eq. (6.150) to obtain
\begin{equation}
  \label{eq:102}
  \frac{\sqrt{z^{2}-a^{2}}}{2\pi} \int_{\mathcal{C}} \frac{s^{2m-1}\ds }{\left(\sqrt{s^{2}-a^{2}}\right)(s-z)}=iz^{2m-1}-i\sqrt{z^{2}-a^{2}}\left(z^{2m-2}+\sum_{j=1}^{m-1}z^{2m-2-2j} a^{2j}\prod_{l=1}^{j}\frac{2l-1}{2l}\right). 
\end{equation}
Combining all the terms in the series, we get
\begin{equation}
  \label{eq:103}
  G(z)=iV_{0}'(z)-i\sqrt{z^{2}-a^{2}}h(z), 
\end{equation}
where $h(z)$ is an analytic function, which has the series with
strictly positive coefficients. Then for $|x|<a$ we have
$\rho(x)=\Re \left[G_{+}(x)\right]=-i\left(\sqrt{x^{2}-a^{2}}\right)_{+}h(x)$. For
$x>a$ we substitute the equations \eqref{eq:29},\;\eqref{eq:103} to the
formula \eqref{eq:106} and arrive at the inequality
\begin{equation}
  \label{eq:105}
  -\int_{a}^{x}\sqrt{y^{2}-a^{2}}h(y)\dy\leq 0, 
\end{equation}
which holds, since the function under the integral is positive.
\end{proof}

For $c<4$ we can no longer solve the variational problem with a smooth
function $\rho$ such that
$\supp\rho\subset\left[-\frac{c}{2},\frac{c}{2}\right]$. 
The potential $V_0(x)$ becomes weaker when $c$ tends to $4$, and when $c=4$ we have a phase transition. In the latter case particles are not confined strictly inside the interval $[-2,2]$ anymore, instead we have a constant density $\rho(x)\equiv 1/4$ on the whole interval. We have an obvious restriction $\rho(x) \leq 1/2$, therefore for $c<4$ it is reasonable to expect 
\begin{equation}
    \label{eq:49}
\rho(x)=\frac{1}{2} - \rho_{1}(x),  
\end{equation}
where $\supp \rho_1 \subset \left[-\frac{c}{2}, \frac{c}{2}\right]$ (see Figure~\ref{fig:Bn-limit-shapes-3}). For $c\ge 4$ it is possible to use Proposition \ref{theorem:deift-min-suff-cond} to describe the minimizer only because the condition $\rho(x) < 1/2$ is satisfied in this case. For $c < 4$ it is no longer the case and we restate the criterion for the minimizer in the following way (with basically the same proof).
  \begin{prop}
    \label{theorem:min-suff-cond-2}
  Suppose $\rho(x)=\frac{1}{2} - \rho_{1}(x)$ is a continuous function
  on $\left[-\frac{c}{2}, \frac{c}{2}\right]$, and $\frac{1}{2} >
  \rho_{1}(x) \ge 0$. Then
  $\rho(x)$ is the minimizer of \eqref{eq:7} if and only if there exists a constant $\ell\in\mathbb{R}$ such that
  \begin{enumerate}[(I)]
      \item
        \begin{equation*}
          \int_{-\frac{c}{2}}^{\frac{c}{2}}\ln|x-y|^{-1}\rho_1(y)\dy +V_{0}(x) \geq \ell
          \quad \text{for any} \quad x\in \left[-\frac{c}{2}, \frac{c}{2}\right],
        \end{equation*}

      \item
        \begin{equation*}
                    \int_{-\frac{c}{2}}^{\frac{c}{2}}\ln|x-y|^{-1}\rho_1(y)\dy +V_{0}(x) = \ell
          \quad \text{for} \quad x\in \mathrm{supp}\;\rho_{1}.
        \end{equation*}
  \end{enumerate}
  \end{prop}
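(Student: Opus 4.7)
The plan is to reduce Proposition~\ref{theorem:min-suff-cond-2} directly to Proposition~\ref{theorem:deift-min-suff-cond} via the change of variable $\rho(x) = \frac{1}{2} - \rho_{1}(x)$. Under this substitution the hypothesis $0 \leq \rho_{1}(x) < \frac{1}{2}$ encodes exactly the two physically required bounds $0 \leq \rho(x) \leq \frac{1}{2}$, and the goal is to show that \eqref{eq:7}, rewritten in $\rho_{1}$, takes the same functional form in $\rho_1$ as the original did in $\rho$, modulo an additive constant independent of $\rho_{1}$. Once that is verified, the statement follows by applying the criterion of Proposition~\ref{theorem:deift-min-suff-cond} with $\rho$ replaced by $\rho_{1}$.

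To check the reduction I would expand the double integral in \eqref{eq:7} under $\rho = \frac{1}{2} - \rho_{1}$ and use identity \eqref{eq:72}, $\int_{-c/2}^{c/2} \ln|x-y|^{-1} \dy = c - 4V_{0}(x)$, to collapse the cross terms. The three pieces that appear are a numerical constant $\frac{1}{8}\int\!\!\int \ln|x-y|^{-1}\dx\dy$, a linear term in $\rho_{1}$ that simplifies to $2\int \rho_{1}(x) V_{0}(x)\dx - \frac{c}{2}\int \rho_{1}\dx$, and the quadratic piece $\frac{1}{2}\int\!\!\int \rho_{1}(x)\rho_{1}(y)\ln|x-y|^{-1}\dx\dy$. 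Combining with the contribution $\int V_{0}(x)\bigl(\frac{1}{2}-\rho_{1}(x)\bigr)\dx$ from the potential part of \eqref{eq:7}, and noting that the mass normalization $\int_{-c/2}^{c/2}\rho\dx = 1$ forces $\int \rho_{1}\dx = \frac{c}{2}-1$, all $\rho_{1}$-dependent terms reduce to
\begin{equation*}
  \tilde{\mathcal{E}}[\rho_{1}] = \frac{1}{2}\int_{-c/2}^{c/2}\!\!\int_{-c/2}^{c/2} \rho_{1}(x)\rho_{1}(y) \ln|x-y|^{-1} \dx\dy + \int_{-c/2}^{c/2} \rho_{1}(x) V_{0}(x) \dx,
\end{equation*}
which is of the same form as \eqref{eq:7}, just with fixed total mass $\frac{c}{2}-1$ for $\rho_{1}$ instead of $1$ for $\rho$.

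The final step is then to invoke the proof of Proposition~\ref{theorem:deift-min-suff-cond} verbatim for $\rho_{1}$: the functional $\tilde{\mathcal{E}}$ is strictly convex by the same argument as for \eqref{eq:7} (cf.\ \cite{deift1999orthogonal}, Theorem~6.27), so the minimizer under $\rho_{1}\geq 0$ with fixed mass is unique, and the Euler--Lagrange / KKT conditions for it read exactly as (I) and (II), with $\ell$ playing the role of the Lagrange multiplier for the mass constraint. The one point requiring attention is that we have an additional pointwise constraint $\rho_{1}(x) < \frac{1}{2}$ which a priori could become active and contribute an extra multiplier; however, the candidate minimizer constructed in Lemma~\ref{lemma:rho-for-c-less-than-4} satisfies this bound strictly on $\supp \rho_{1}$, so the inequality remains inactive at the optimum and conditions (I), (II) are the only ones needed. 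This is the sole place where the proof differs in any substantive way from Proposition~\ref{theorem:deift-min-suff-cond}, and it is the step I expect to be the main (minor) obstacle.
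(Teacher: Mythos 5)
Your reduction is correct and is essentially the argument the paper intends: the paper only remarks that Proposition~\ref{theorem:min-suff-cond-2} has ``basically the same proof'' as Proposition~\ref{theorem:deift-min-suff-cond}, and the substitution $\rho=\tfrac12-\rho_{1}$ together with identity \eqref{eq:72} and the mass constraint $\int\rho_{1}=\tfrac{c}{2}-1$ is exactly what the paper carries out implicitly at the Euler--Lagrange level in \eqref{eq:97} and \eqref{eq:52}. Your explicit check that the functional retains the same form in $\rho_{1}$ up to an additive constant, and your observation that the bound $\rho_{1}<\tfrac12$ stays inactive, fill in the details the paper omits without changing the route.
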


In the next lemma we show that
$\rho_{1}(x)$ can be computed the same way as in Lemma
\ref{lemma:rho-for-c-greater-4} and solve the variational problem for 
$c\in [2;4]$.

\begin{lemma}
\label{lemma:rho-for-c-less-than-4}
  For $2\leq c\leq 4$
\begin{equation}
  \label{eq:8}
  \rho(x)=\frac{1}{2}-\frac{1}{\pi^{2}}\Re\left[\sqrt{x^2-2c+4}\int_{-\sqrt{2c-4}}^{\sqrt{2c-4}}\frac{\frac{1}{4}\left(\ln\left(\frac{c}{2}+s\right)-\ln\left(\frac{c}{2}-s\right)\right)}{\left(\sqrt{s^2-2c+4}\right)_+(s-x)}\mathrm{ds}\right].
\end{equation}

\end{lemma}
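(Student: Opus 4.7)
The plan is to reduce the $2 \le c \le 4$ problem to the one solved in Lemma \ref{lemma:rho-for-c-greater-4} via the substitution $\rho(x) = \frac{1}{2} - \rho_1(x)$ suggested by Proposition \ref{theorem:min-suff-cond-2}. Plugging this ansatz into the functional \eqref{eq:7} and using the identities \eqref{eq:72}, \eqref{eq:73} to evaluate the integrals of the constant piece, the quadratic and linear cross terms combine so that, up to an additive constant, the resulting functional of $\rho_1$ has exactly the same shape as \eqref{eq:7}:
\begin{equation*}
\tfrac{1}{2}\!\int\!\!\int \rho_1(x)\rho_1(y)\ln|x-y|^{-1}\,\dx\,\dy + \int \rho_1(x) V_0(x)\,\dx - \tfrac{c}{2}\!\int \rho_1(x)\,\dx + \text{const}.
\end{equation*}
Since $\rho \equiv \tfrac{1}{2}$ on $[-c/2, c/2]\setminus \supp\rho_1$, the mass constraint $\int \rho = 1$ translates into $\int \rho_1 = c/2 - 1$, which is non-negative precisely for $c \ge 2$.

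The linear term $-\tfrac{c}{2}\!\int \rho_1$ is absorbed into the Lagrange multiplier for the normalization, so the Euler--Lagrange equation on $\supp \rho_1 = [-a_1, a_1]$ is
\begin{equation*}
\int \ln|x-y|^{-1}\rho_1(y)\,\dy + V_0(x) = \ell,
\end{equation*}
which is condition (II) of Proposition \ref{theorem:min-suff-cond-2} and is identical to the integral equation solved in Lemma \ref{lemma:rho-for-c-greater-4}. I would therefore replay verbatim the Hilbert-transform / Plemelj analysis \eqref{eq:29}--\eqref{eq:43} with $\rho_1$ in place of $\rho$. The only change is in the leading $1/z$-asymptotic of $G(z)$, where the right-hand side of the normalization equation \eqref{eq:47} is now $c/2 - 1$ rather than $1$. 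Solving
\begin{equation*}
\tfrac{c}{4}\bigl[1 - \sqrt{1-(2a_1/c)^2}\bigr] = \tfrac{c}{2} - 1
\end{equation*}
still yields $a_1 = \sqrt{2c-4}$, and the containment $a_1 \le c/2$ reduces to $(c-4)^2 \ge 0$. Substituting into the Plemelj formula reproduces the same integral representation for $\rho_1$ as \eqref{eq:50}, and then $\rho = \tfrac{1}{2} - \rho_1$ is exactly \eqref{eq:8}.

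To finish, I verify the optimality conditions of Proposition \ref{theorem:min-suff-cond-2}. Condition (II) holds by construction. Condition (I) on $a_1 \le |x| \le c/2$ reduces, after subtracting (II), to the analogue of the inequality \eqref{eq:106} for $\rho_1$, which follows from the positive-coefficient power-series representation \eqref{eq:103} exactly as in Lemma \ref{lemma:rho-for-c-greater-4}. Strict convexity of the functional (\cite{deift1999orthogonal}, Theorem 6.27) then gives uniqueness.

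The main obstacle, genuinely new to this regime, is the pointwise bound $\rho_1(x) \le \tfrac{1}{2}$ on $[-a_1, a_1]$, which is required so that the reconstructed $\rho$ is non-negative and which has no analogue in Lemma \ref{lemma:rho-for-c-greater-4}. I would establish it by a direct estimate of the explicit integral \eqref{eq:50} combined with monotonicity in $c$ and the boundary values $a_1 = 0$ at $c = 2$ (so $\rho_1 \equiv 0$ and $\rho \equiv \tfrac{1}{2}$) and $\rho_1 \equiv \tfrac{1}{4}$ at $c = 4$; the latter matches the $c \to 4^\pm$ limit of both branches of \eqref{eq:68} and certifies continuity of the phase transition. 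Alternatively, a region where $\rho < 0$ could be replaced by $\max(\rho, 0)$ with appropriate mass redistribution, strictly decreasing the strictly convex functional \eqref{eq:7} and contradicting optimality of the Riemann--Hilbert candidate.
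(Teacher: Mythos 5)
Your proposal is correct and follows essentially the same route as the paper: the ansatz $\rho=\tfrac12-\rho_1$, reduction to the same singular integral equation with normalization $\int\rho_1=\tfrac{c-2}{2}$, the modified endpoint equation again giving $a=\sqrt{2c-4}$, and a verbatim replay of the Riemann--Hilbert analysis and optimality check from Lemma \ref{lemma:rho-for-c-greater-4}. Your extra attention to the pointwise bound $\rho_1\le\tfrac12$ (needed for Proposition \ref{theorem:min-suff-cond-2} to apply) is a point the paper passes over silently, and is a worthwhile addition rather than a deviation.
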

\begin{proof}
  Denote by $\rho_{0}(x)\equiv \frac{1}{4}$ a constant solution to the
  equation  \eqref{eq:24}. If we look for a solution 
  to the variational problem
  \eqref{eq:7} with the normalization condition \eqref{eq:25} 
  in the form \eqref{eq:49}
  so that 
  \begin{equation}
    \label{eq:97}
    \int_{-c/2}^{c/2}\frac{\rho(y)\dy}{x-y}=\int_{-c/2}^{c/2}\frac{(2\rho_{0}(y)-\rho_{1}(y))\dy}{x-y}=-2V_{0}'(x)+\int_{-c/2}^{c/2}\frac{\rho_{1}(y)\dy}{x-y}=-V_{0}'(x),
  \end{equation}
  then
  the function $\rho_{1}(x)$ should also be  a solution of
  \eqref{eq:24}, but 
  with a different normalization condition
  \begin{equation}
    \label{eq:52}
    \int_{-c/2}^{c/2}\rho_{1}(x)\dx=-\int_{-c/2}^{c/2}\rho(x)\dx+2\int_{-c/2}^{c/2}\rho_{0}(x)\dx=\frac{c-2}{2}.
  \end{equation}
  The integral representation of $\rho_{1}(x)$ is obtained 
  in the same way as in Lemma \ref{lemma:rho-for-c-greater-4}, but equation \eqref{eq:47} becomes
  \begin{equation}
    \label{eq:53}
     \frac{c}{4}\left[1-\sqrt{1-\left(\frac{2a}{c}\right)^{2}}\right]=\frac{c-2}{2},
  \end{equation}
  and we again get $a=\sqrt{2c-4}$.
  We see that $\rho_{1}(x)$ is given by the formula \eqref{eq:50} and
  satisfies the normalization condition \eqref{eq:52}. Thus we have derived the
  expression \eqref{eq:8} for $\rho(x)$ for $c\in[2,4]$.
  The check of the  minimization conditions (I) and (II) of Proposition
  \ref{theorem:min-suff-cond-2} is the same as in Lemma
  \ref{lemma:rho-for-c-greater-4}. 

\end{proof}

\begin{figure}[h]
  \centering
  \includegraphics[width=10cm]{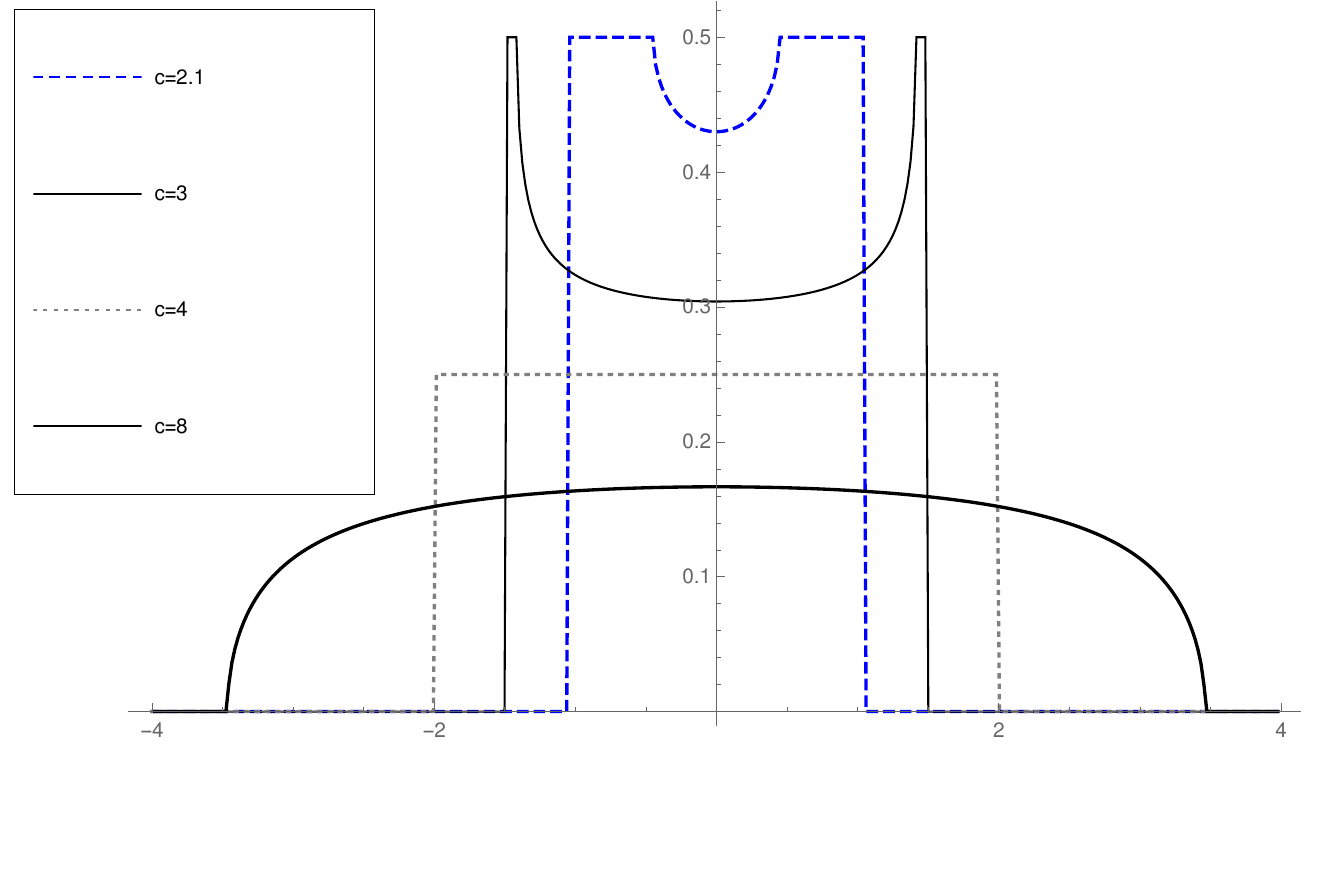}
  \caption{Plots of the density $\rho(x)$ given by the formula~\eqref{eq:68} for $c=2.1$ (blue dash), $c=3$ (black line) $c=4$ (gray dots) $c=8$ (thick black line). }
  \label{fig:density-plots}
\end{figure}

The integral in the formula \eqref{eq:50} can be calculated and we can
write down the
expression for $\rho(x)$ in terms of inverse trigonometric functions.
\begin{lemma}
  \label{lemma:explicit-limit-for-rho}
  The density $\rho(x)$ can be written explicitly as:

  \begin{equation}
    \label{eq:54}
    \rho(x)=\left\{
    \begin{array}{l}
    \frac{\theta(\sqrt{2c-4}-|x|)}{4\pi}\left[\arctan \left(\frac{-c (x-4)-8}{(c-4) \sqrt{2
      c-4-x^2}}\right)+\arctan\left(\frac{c (x+4)-8}{(c-4) \sqrt{2
      c-4-x^2}}\right)\right], \quad c\geq 4,\\
     \frac{1}{2}-\frac{\theta(\sqrt{2c-4}-|x|)}{4\pi}\left[\arctan \left(\frac{-c (x-4)-8}{(4-c) \sqrt{2
      c-4-x^2}}\right)+\arctan\left(\frac{c (x+4)-8}{(4-c) \sqrt{2
      c-4-x^2}}\right)\right], \; c\in[2, 4],
    \end{array} \right.
  \end{equation}
  where $\theta(\sqrt{2c-4}-|x|)$ is the Heaviside step function. By
  the use of trigonometric identities formula \eqref{eq:54} can be
  rewritten as \eqref{eq:68}.
\end{lemma}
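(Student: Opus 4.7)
The plan is to evaluate the integral appearing in \eqref{eq:50} (resp.\ \eqref{eq:8}) in closed form by a contour-integration argument, separating the analysis into the two regions $|x|>\sqrt{2c-4}$ and $|x|<\sqrt{2c-4}$, and then to translate the resulting complex logarithms into the arctangents of \eqref{eq:54}.

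First, for $|x|>\sqrt{2c-4}$, the factor $\sqrt{2c-4-x^{2}}$ in front of the integral is purely imaginary, whereas the principal-value integral over $s\in[-\sqrt{2c-4},\sqrt{2c-4}]$ has a real integrand and is therefore real. Hence its product has zero real part, which is exactly the statement encoded by the Heaviside factor $\theta(\sqrt{2c-4}-|x|)$ in \eqref{eq:54}. No computation is required in this region.

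Next, for $|x|<a:=\sqrt{2c-4}$, I would rewrite the principal-value integral as a contour integral using the standard Plemelj identity: choose the branch of $\sqrt{s^{2}-a^{2}}$ analytic on $\mathbb{C}\setminus[-a,a]$ with $(\sqrt{s^{2}-a^{2}})_{+}=i\sqrt{a^{2}-s^{2}}$ on the upper side of the cut, so that
\begin{equation*}
\sqrt{a^{2}-x^{2}}\,\mathrm{p.v.}\!\int_{-a}^{a}\!\frac{V_{0}'(s)\,\ds}{\sqrt{a^{2}-s^{2}}(s-x)}
\;=\;\Re\!\left[\sqrt{x^{2}-a^{2}}\oint_{\mathcal{C}}\!\frac{V_{0}'(s)\,\ds}{\sqrt{s^{2}-a^{2}}(s-x)}\right],
\end{equation*}
where $\mathcal{C}$ is a clockwise loop around $[-a,a]$ lying in the domain of analyticity of $V_{0}'(s)=\tfrac{1}{4}\ln\tfrac{c/2+s}{c/2-s}$. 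I place the logarithmic cuts of $V_{0}'$ on $(-\infty,-c/2]$ and $[c/2,+\infty)$ and deform $\mathcal{C}$ outward to two contours wrapping these cuts; the deformation is justified because the integrand decays like $|s|^{-2}\ln|s|$ at infinity, and no other singularities are crossed (the pole at $s=x$ lies inside $\mathcal{C}$ and $V_{0}'$ is analytic elsewhere).

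On each wrapped cut the jump of $V_{0}'$ equals $\pm i\pi/2$, so the contour integral collapses to the ordinary real integrals
\begin{equation*}
\int_{c/2}^{\infty}\!\frac{\ds}{\sqrt{s^{2}-a^{2}}\,(s-x)}\qquad\text{and}\qquad\int_{-\infty}^{-c/2}\!\frac{\ds}{\sqrt{s^{2}-a^{2}}\,(s-x)}.
\end{equation*}
These are elementary: with the substitution $s=a\cosh t$ (or by the standard antiderivative $\tfrac{1}{\sqrt{a^{2}-x^{2}}}\arctan\!\frac{sx-a^{2}}{\sqrt{a^{2}-x^{2}}\sqrt{s^{2}-a^{2}}}$), and using $\mathrm{arctanh}(iy)=i\arctan y$ when continuing from $|x|>a$ to $|x|<a$, they evaluate to $\frac{1}{\sqrt{a^{2}-x^{2}}}$ times arctangents of the precise rational expressions that appear in \eqref{eq:54}. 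Substituting $a^{2}=2c-4$ and combining the two branch-cut contributions via the arctan addition identity produces exactly the sum of two arctangents in \eqref{eq:54} after the factor $\sqrt{2c-4-x^{2}}$ cancels the $\sqrt{a^{2}-x^{2}}^{-1}$ from the antiderivatives. Finally, the case $c\in[2,4]$ is reduced by Lemma \ref{lemma:rho-for-c-less-than-4} to the same calculation for $\rho_{1}$; the only change is that the normalization \eqref{eq:53} replaces \eqref{eq:47}, which flips the sign $(c-4)\mapsto(4-c)$ in the denominators of the arctan arguments and inserts the additive $\tfrac{1}{2}$, yielding the second line of \eqref{eq:54}.

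The main obstacle is the bookkeeping of branches: the square root $\sqrt{s^{2}-a^{2}}$ with cut on $[-a,a]$ and the logarithm with cuts on $(-\infty,-c/2]\cup[c/2,\infty)$ must be tracked simultaneously when deforming the contour and when continuing $\sqrt{x^{2}-a^{2}}$ from outside to inside $[-a,a]$. The nontrivial identity $\mathrm{arctanh}(iy)=i\arctan y$, together with the symmetry $s\leftrightarrow -s$ pairing the two branch cuts, is what converts the complex logarithm that falls out naturally from the contour calculation into the pair of arctangents with the specific arguments $\frac{-c(x-4)-8}{(c-4)\sqrt{2c-4-x^{2}}}$ and $\frac{c(x+4)-8}{(c-4)\sqrt{2c-4-x^{2}}}$ displayed in \eqref{eq:54}.
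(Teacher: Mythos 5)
Your calculation is correct and does reach \eqref{eq:54}, but by a genuinely different route than the paper. The paper's proof stays on the real line: it recognizes $\frac{1}{\pi}\ln\left|\frac{s-c/2}{s+c/2}\right|$ as the Hilbert transform of the indicator of $[-c/2,c/2]$, uses the anti-self-adjointness $\int f\tilde{g}=-\int \tilde{f}g$ to move the transform onto the algebraic kernel, computes the Hilbert transform of $\frac{1}{\sqrt{y^{2}-a^{2}}(y-x)}$ in closed form via the rational substitution \eqref{eq:58}, and then evaluates one remaining integral over $[-c/2,c/2]$, producing the complex logarithms \eqref{eq:59} that are finally converted to arctangents. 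You instead deform the contour onto the branch cuts of $V_{0}'$ on $(-\infty,-c/2]\cup[c/2,\infty)$, where the jump is the constant $i\pi/2$, reducing everything to the elementary integrals $\int_{c/2}^{\infty}\frac{\ds}{\sqrt{s^{2}-a^{2}}(s\mp x)}$. Both arguments exploit that $V_{0}'$ is the log-potential of a uniform charge on $[-c/2,c/2]$; the paper integrates over that interval, you over its complement. Your route avoids the table integral and the passage through complex logarithms, at the price of branch and orientation bookkeeping, and I have checked that it reproduces \eqref{eq:54} exactly, including the factor $|c-4|=\sqrt{c^{2}-4a^{2}}$ that accounts for the sign flip $(c-4)\mapsto(4-c)$ in the second line. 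Two small corrections to your narrative: the two arctangents in \eqref{eq:54} come one from each cut, via the endpoint $s=c/2$ of the antiderivative $-\frac{1}{\sqrt{a^{2}-x^{2}}}\arctan\frac{a^{2}-xs}{\sqrt{a^{2}-x^{2}}\sqrt{s^{2}-a^{2}}}$, and no arctan addition identity is needed --- what the $s\leftrightarrow -s$ symmetry actually does is cancel the two boundary terms $\pm\arctan\frac{x}{\sqrt{a^{2}-x^{2}}}$ arising at $s=\infty$; and even if your deformation does sweep past the pole at $s=z$ (as it must if one works with $z=x+i\varepsilon$ rather than real $x$), the residue contributes $iV_{0}'(x)$, which is purely imaginary for $|x|<\sqrt{2c-4}$ and drops out of the real part, so the final answer is unaffected either way.
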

\begin{proof}
  We need to compute the integral in \eqref{eq:50}, we can combine
  logarithms the same way as we did in equation \eqref{eq:43}:
  \begin{equation}
    \label{eq:55}
    \frac{1}{\pi^{2}}\int_{-\sqrt{2c-4}}^{\sqrt{2c-4}}\frac{\left(\ln\left(\frac{c}{2}+s\right)-\ln\left(\frac{c}{2}-s\right)\right)}{\sqrt{2c-4-s^{2}}(s-x)}\mathrm{ds}=
  \frac{1}{\pi}\int_{-\sqrt{2c-4}}^{\sqrt{2c-4}}\frac{1}{\sqrt{2c-4-s^2}(s-x)}\cdot\frac{1}{\pi}
  \ln\left|\frac{s-\frac{c}{2}}{s+\frac{c}{2}}\right|\mathrm{ds}.
  \end{equation}
To calculate this integral notice that the function
$\frac{1}{\pi}\ln\left|\frac{s-c/2}{s+c/2}\right|$ is the Hilbert
transform of the 
indicator function $\bf{1}_{[-c/2,c/2]}$.
Then we can use
the following well-known relation (see, for example, \cite{giang2010finite}):
\begin{equation}
  \label{eq:42}
  \int_{-\infty}^{\infty}f(s)\tilde{g}(s)\ds=-\int_{-\infty}^{\infty}\tilde{f}(s)g(s)\ds,
\end{equation}
where $\tilde{f}$ is a Hilbert transform of $f$ and $f\in
L^{p}(\mathbb{R})$, $g\in L^{q}(\mathbb{R})$ with
$\frac{1}{p}+\frac{1}{q}=1$. In particular, substituting 
the 
indicator function $g=\bf{1}_{[-c/2,c/2]}$ we obtain
\begin{equation}
 \label{eq:44}
  \frac{1}{\pi}\int_{-\infty}^{\infty}f(s)\ln\left|\frac{s-c/2}{s+c/2}\right|\ds=-\int_{-c/2}^{c/2}\tilde{f}(s)\ds.
\end{equation}
Thus, we need to compute the Hilbert transform for the function
${f(y)=\frac{1}{\pi}\frac{1}{\sqrt{y^{2}-2c+4}(y-x)}}$ for $y\in[-\sqrt{2c-4},\sqrt{2c-4}]=[-a,a]$, and
$f(y)=0$ for $y\not\in[-a,a]$ and then integrate it from
$-c/2$ to $c/2$. Integral in the Hilbert transform is computed explicitly by the change of variables
\begin{equation}
  \label{eq:58}
y=a\frac{a^{2}-t^{2}}{a^{2}+t^{2}},\quad
\frac{\dy}{\sqrt{a^{2}-y^{2}}}=-\frac{2a\dt}{a^{2}+t^{2}},
\end{equation}
and we obtain
\begin{equation}
  \label{eq:45}
  \tilde{f}(z)=\frac{1}{\pi^{2}}\int_{-a}^{a}\frac{ds}{\sqrt{a^{2}-s^{2}}(s-x)(s-z)}=\frac{1}{\pi}
  \frac{
    \left(\frac{1}{\sqrt{z^2-a^2}}-\frac{1}{\sqrt{x^2-a^2}}\right)}{x-z}.
\end{equation}
Now we need to compute the integral
\begin{multline}
  \label{eq:56}
  \rho(x)=\frac{1}{\pi}\Re\left[\sqrt{a^{2}-x^{2}}\int_{-c/2}^{c/2}\frac{1}{4}
    \left(\frac{1}{(x-z)\sqrt{z^2-a^2}}-\frac{1}{(x-z)\sqrt{x^2-a^2}}\right)
  dz\right].
\end{multline}
Here again we can use the substitution \eqref{eq:58} or find the
indefinite integral in the reference \cite{gradshteyn2014table} and obtain
\begin{multline}
  \label{eq:59}
  \rho(x)=-\frac{1}{4
   \pi }\left[\Im\left(\log \left(\sqrt{(c-4)^{2}} \sqrt{x^2-2 c+4}-c
       (x-4)-8\right)+
   \right.\right.\\\left.\left.
     +\log \left(\sqrt{(c-4)^{2}} \sqrt{x^2-2 c+4}+c (x+4)-8\right)\right)-\pi \right].
\end{multline}
This answer is easily rewritten in terms of the inverse trigonometric
functions for $c\geq 4, |x|\leq \sqrt{2c-4}$  as
\begin{equation}
  \label{eq:60}
  \rho(x)=\frac{1}{4\pi}\left[\arctan \left(\frac{-c (x-4)-8}{(c-4) \sqrt{2
      c-4-x^2}}\right)+\arctan\left(\frac{c (x+4)-8}{(c-4) \sqrt{2
      c-4-x^2}}\right)\right].
\end{equation}
Using Lemma \ref{lemma:rho-for-c-less-than-4}, choosing the positive
square root of $\sqrt{(c-4)^{2}}$ and taking into account the values
of the imaginary part of logarithms in formula \eqref{eq:59} for
$x^{2}>2c-4$, we obtain the desired formula \eqref{eq:54}.

Using the identities $\arctan x+\arctan y=\arctan\frac{x+y}{1+xy}$ and
$\cos
\frac{\theta}{2}=\pm\sqrt{\frac{1}{2}\pm\frac{1}{\sqrt{1+\tan^{2}\theta}}}$
and taking care of the signs we arrive at the final formula
\eqref{eq:68}.
\end{proof}

Thus we have solved the variational problem for the limit shape. The
graphs of the density are presented in Fig~\ref{fig:density-plots}. In
the next section we prove the convergence of the generalized Young
diagrams to the limit shape.

\section{Convergence of the probability measure}
\label{sec:conv-prob-meas}
In this section we use the functional and the limit shape to introduce a
pseudo-distance on the space of functions with bounded derivative. Then we
estimate the probability of weights with a given deviation according to
this distance and use this estimate to show that probability of deviation goes
to zero as $n$ goes to infinity. We use the fact that quadratic part of the
functional is the same as in the case of Vershik-Kerov-Logan-Shepp limit shape
to conclude that convergence with respect to the pseudo-distance entails the
convergence in the supremum norm.

We've proven that the probability of a weight is given by a
quadratic functional of a rotated diagram boundary $f_{n}$ (See Lemma \ref{lemma:func-form-of-prob-1}). We use the
notation that is very similar to Dan Romik's book
\cite{romik2015surprising}, the functional is denoted by $J[f_{n}]$,
its quadratic in the derivative of $f_{n}$ part is denoted by
$Q[f_{n}]$:
\begin{equation}
  \label{eq:84}
  J[f_{n}]=Q[f_{n}]+C,\quad Q[f_{n}]=\frac{1}{2}\int_{-c/2}^{c/2}\int_{-c/2}^{c/2}
        \frac{1}{16}f_{n}'(x)f_{n}'(y)\ln|x-y|^{-1}\dx\;\dy.
\end{equation}
Since our definition of $Q$ differs from a definition in the
book~\cite{romik2015surprising} only by a factor $\frac{1}{16}$, we
can use the Proposition 1.15 there and see that $Q$ is
positive-definite on compactly-supported Lipschitz functions.

Then for a compactly supported Lipschitz function $f:\mathbb{R}\to
[0,\infty)$ the quadratic part $Q$ of the functional $J$
is used to introduce a norm
\begin{equation}
  \label{eq:85}
  ||f||_{Q}=Q[f]^{1/2}.
\end{equation}
Consider a space of $1$-Lipschitz functions $f_{1},f_{2}$, such that the
derivative 
$f_{1,2}'(x)=1$ 

for $|x|>\frac{c}{2}$. Then the
difference $f_{1}-f_{2}$ is a compactly supported Lipschitz function
and we can use its norm to introduce a metric $d_{Q}$
\begin{equation}
  \label{eq:86}
  d_{Q}(f_{1},f_{2})=||f_{1}-f_{2}||_{Q}.
\end{equation}
We can use Lemma 1.21 in \cite{romik2015surprising}
to obtain an estimate on the supremum norm for a Lipschitz function $f$ with a
compact support:
\begin{equation}
  \label{eq:87}
  ||f||_{\infty}=\sup_{x}|f(x)|\leq C_{1} Q[f]^{1/4},
\end{equation}
where $C_{1}$ is some constant.

\begin{lemma}
  \label{lemma:functional-greater-than-zero}
  The value of the functional $J[f]$ on the limit shape $f$ is
  non-negative:
  \begin{equation}
    \label{eq:21}
    J[f]=\frac{1}{2}\int_{-c/2}^{c/2}\int_{-c/2}^{c/2}
        \frac{1}{16}f_{n}'(x)f_{n}'(y)\ln|x-y|^{-1}\dx\;\dy+C\geq 0,
      \end{equation}
      where $C$ is given by the equation \eqref{eq:70}. 
\end{lemma}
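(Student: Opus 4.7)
The strategy is to exploit the probabilistic interpretation of the functional. Since $\mu_n$ is a probability measure, $\mu_n(\lambda)\leq 1$ for every dominant integral weight $\lambda$. Inserting this into the asymptotic formula of Lemma~\ref{lemma:func-form-of-prob-1},
\begin{equation*}
\mu_n(\lambda)=\exp\bigl(-(2n)^{2}J[f_n]+\mathcal{O}(n\ln n)\bigr),
\end{equation*}
immediately yields the uniform lower bound
\begin{equation*}
J[f_n]\geq -K\,\frac{\ln n}{n}
\end{equation*}
with an absolute constant $K$, valid for every rotated-and-scaled diagram boundary $f_n$ coming from a dominant integral weight.

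To turn this into the statement $J[f]\geq 0$ for the continuous limit shape, the plan is to construct a sequence of dominant weights $\lambda_n$ whose boundaries $f_n$ approximate $f$. A quantile discretization of $\rho$ does the job: define $x_i^{\ast}\in(0,c/2)$ by $\int_{x_i^{\ast}}^{c/2}\rho(t)\,\dt=(i-\tfrac{1}{2})/(2n)$ for $i=1,\dots,n$, and let $a_i^{(n)}$ be the integer of the parity dictated by $N$ that is closest to $2n\,x_i^{\ast}$. Monotonicity of the cumulative distribution of $\rho$ guarantees $a_1^{(n)}>\cdots>a_n^{(n)}$, so these data define a valid dominant integral weight, and the corresponding boundary $f_n$ satisfies $||f_n-f||_{\infty}\to 0$ by a standard Riemann-sum argument together with continuity of $\rho$.

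The main technical obstacle is to transfer the lower bound on $J[f_n]$ to the limit, i.e.\ to show $J[f_n]\to J[f]$ along this sequence. Since $J=Q+C$ with $Q$ bilinear and $C$ a constant, it suffices to check $Q[f_n]\to Q[f]$, which by bilinearity reduces to
\begin{equation*}
Q[f_n]-Q[f]=\tfrac{1}{2}\bigl(||f_n||_{Q}^{2}-||f||_{Q}^{2}\bigr)=\tfrac{1}{2}\langle f_n-f,\,f_n+f\rangle_{Q}.
\end{equation*}
The norms $||f_n||_{Q}$ and $||f||_{Q}$ are uniformly bounded on the class of $1$-Lipschitz functions with derivative $+1$ outside $[-c/2,c/2]$, because the logarithmic kernel is integrable on a compact set. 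The convergence $||f_n-f||_{Q}\to 0$ follows by splitting the double integral defining $Q[f_n-f]$ into a near-diagonal piece, controlled by the common Lipschitz bound together with the integrability of $\ln|x-y|^{-1}$, and a far piece where the kernel is bounded and $f_n'-f'$ tends to zero in an averaged sense (through the uniform convergence $f_n\to f$ and integration by parts). Combining the probabilistic bound with this continuity gives $J[f]=\lim_{n\to\infty}J[f_n]\geq\lim_{n\to\infty}(-K\ln n/n)=0$, which is the desired inequality.
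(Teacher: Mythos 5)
Your overall strategy coincides with the paper's: bound $J$ from below via $\mu_{n}(\lambda)\le 1$ and Lemma~\ref{lemma:func-form-of-prob-1}, construct a sequence of genuine dominant weights whose boundaries $f_{n}$ approximate $f$, and pass to the limit by showing $J[f_{n}]\to J[f]$. Your quantile discretization of $\rho$ is a legitimate alternative to the paper's choice (the largest diagram lying under $f$), and the gap bound $x_{i}^{\ast}-x_{i+1}^{\ast}\ge 1/n$ coming from $\rho\le 1/2$ does make the rounded $a_{i}$ distinct, so the construction itself is sound.

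The gap is in the continuity step. You reduce $Q[f_{n}]-Q[f]$ to $\langle f_{n}-f,\,f_{n}+f\rangle_{Q}$ and conclude it is small from $\|f_{n}-f\|_{Q}\to 0$ together with boundedness of $\|f_{n}+f\|_{Q}$; this is an implicit Cauchy--Schwarz for the bilinear form associated with $Q$. But $Q$ is only known to be positive semidefinite on \emph{compactly supported} Lipschitz functions (Proposition 1.15 of \cite{romik2015surprising}, used in the paper via the Fourier representation), whereas $(f_{n}+f)'$ satisfies $\int_{-c/2}^{c/2}(f_{n}+f)'\,\dx=2(c-4)\neq 0$ by \eqref{eq:74}. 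On such functions the form is genuinely indefinite for large $c$: by \eqref{eq:73} the energy of the uniform density on $[-c/2,c/2]$ is $\frac{3}{2}c^{2}-c^{2}\ln c<0$ once $c>e^{3/2}$. So Cauchy--Schwarz is not available and the cross term
\begin{equation*}
L[\delta f_{n}]=\frac{1}{16}\int_{-c/2}^{c/2}\int_{-c/2}^{c/2} f'(x)\ln|x-y|^{-1}\,\delta f_{n}'(y)\,\dx\,\dy
\end{equation*}
needs a separate argument; note that $\delta f_{n}'$ does not tend to zero pointwise, only $\delta f_{n}$ does. The paper kills this term exactly by invoking the equilibrium condition: by \eqref{eq:23} and \eqref{eq:72} the inner integral $\int f'(x)\ln|x-y|^{-1}\dx$ is the constant $c-4\ell$ for $y\in\supp\rho$, and since $\int\delta f_{n}'=0$, the whole of $L[\delta f_{n}]$ vanishes provided $\supp\delta f_{n}\subset\supp\rho$ --- which is automatic for the paper's choice $f_{n}\le f$ but only approximately true for your rounded quantiles (the largest particle can overshoot $\sqrt{2c-4}$ by $O(1/n)$, which one must then estimate). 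Your argument can be repaired either by importing this Euler--Lagrange step, or by integrating $L[\delta f_{n}]$ by parts in $y$ and estimating the resulting principal-value integral of $\delta f_{n}(y)/(x-y)$ using $\|\delta f_{n}\|_{\infty}=O(1/n)$ and the Lipschitz bound; as written, the step is not justified.
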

\begin{proof}
  We prove this lemma by constructing a sequence of weights
  $\{\lambda_{n}\}$ with the corresponding diagram boundaries $f_{n}$
  such that $\{J[f_{n}]\}$ converges to $J[f]$ and
  $\mu_{n}(\lambda_{n})$ converges to $e^{-(2n)^{2}J[f]}$. Since
  $\mu_{n}(\lambda_{n})\leq 1$ we will see that $J[f]\geq 0$.

  For $n>0$ take a weight $\lambda_{n}$ such that its diagram has $n$
  rows and the diagram boundary $f_{n}(x)\leq f(x)$ and
  $f(x)-f_{n}(x)\leq \frac{\sqrt{2}}{n}$, i.e. take the diagram with
  the longest possible rows that do not cross the limit shape. 

  Denote by $\delta f_{n}$ a difference of $f_{n}$ and $f$:
  \begin{equation}
    \label{eq:89}
    \delta f_{n}(x)=f_{n}(x)-f(x).
  \end{equation}
The function $\delta f_{n}(x)$ is a Lipschitz with a compact support.
Then from Lemma \ref{lemma:func-form-of-prob-1} we can write the probability
$\mu_{n}(\lambda)=\exp\left(-(2n)^{2}J[f_{n}]+\mathcal{O}(n\ln n)\right)$
as
  \begin{multline}
    \label{eq:90}
    \mu_{n}(\delta f_{n})=\exp\left(-(2n)^{2} \left(
        J[f]+Q[\delta f_{n}] +\frac{1}{16} \int_{-\frac{c}{2}}^{\frac{c}{2}}
        \int_{-\frac{c}{2}}^{\frac{c}{2}} f'(x) \ln
        |x-y|^{-1} \delta f_{n}'(y)
        \dx\,\dy\right)\right)\\\cdot\exp\left(\mathcal{O}(n\ln n)\right).
  \end{multline}
Denote the last term in the exponent by $L[\delta f_{n}]$:
\begin{equation}
  \label{eq:94}
  L[\delta f_{n}]:=\frac{1}{16} \int_{-\frac{c}{2}}^{\frac{c}{2}}
  \int_{-\frac{c}{2}}^{\frac{c}{2}} f'(x) \ln |x-y|^{-1} \delta f_{n}'(y)
  \dx\,\dy.
\end{equation}
We need to estimate the terms $Q[\delta f_{n}]$ and $L[\delta f_{n}]$.
First consider $L[\delta f_{n}]$. Note, that both $f(x)$ and
$f_{n}(x)$ satisfy the condition $\int_{-c/2}^{c/2}f'(x)\dx=c-4$
\eqref{eq:74}. Thus for $\delta f_{n}(x)$ we have
\begin{equation}
    \label{eq:91}
    \int_{-c/2}^{c/2}\delta f_{n}'(x)\dx=0.
\end{equation}
Also for $|y|<\sqrt{2c-4}$ we can substitute $f'(x)=1-4\rho(x)$ and due to
Proposition \ref{theorem:deift-min-suff-cond} and equations
\eqref{eq:23}, \eqref{eq:72} we see that the integral over $x$ 
is a constant $c-4\ell$:
  \begin{equation}
    \label{eq:92}
    \int_{-c/2}^{c/2}f'(x)\ln|x-y|^{-1}\dx = c-4\ell.
  \end{equation}
If we subtract $\frac{1}{16}\int_{-c/2}^{c/2}(c-4\ell)\delta f_{n}'(y)\dy=0$ from $L[\delta f_{n}]$, we get
  \begin{equation}
    \label{eq:93}
L[\delta f_{n}]=\frac{1}{16} \int_{-\frac{c}{2}}^{\frac{c}{2}}\left[
  \int_{-\frac{c}{2}}^{\frac{c}{2}} f'(x) \ln |x-y|^{-1} 
  \dx-(c-4\ell)\right]\,\delta f_{n}'(y)\dy.
  \end{equation}
The expression inside the brackets is zero if $|y|<\sqrt{2c-4}$ and less than
zero if $y>\sqrt{2c-4}$ (greater than zero for $y<-\sqrt{2c-4}$). Since
$f'(x)=1$ and $f_{n}'(x)=1$ for $|x|>\sqrt{2c-4}$, we see that
$\delta f_{n}'(y)\leq 0$ for $y>\sqrt{2c-4}$ (and greater than zero for
$y<-\sqrt{2c-4}$) and conclude that value of $L[\delta f_{n}]$   is
non-negative.

Since $\mathrm{supp}\,\delta f_{n}\subset
[-\sqrt{2c-4},\sqrt{2c-4}]$, i.e. $\mathrm{supp}f_{n}\subset
\mathrm{supp} f$, the term $L[\delta f_{n}]$ is exactly
zero.

Now we need to estimate the contribution $Q[\delta f_{n}]$. To do so
we need the inequality
\begin{equation}
  \label{eq:64}
  -\int_{-\frac{c}{2}}^{\frac{c}{2}}\int_{-\frac{c}{2}}^{\frac{c}{2}}
  \delta f_{n}'(x)\delta f_{n}'(y)\ln |x-y|
  \dx\;\dy\leq \frac{C}{2}\int_{-\infty}^{\infty}\int_{-\infty}^{\infty}\left(\frac{\delta
    f_{n}(x)-\delta f_{n}(y)}{x-y}\right)^{2} \dx\;\dy,
\end{equation}
for some constant $C$, that is valid for all piecewise continuously
differentiable functions with a compact support.
This inequality can be obtained using the Proposition 1.15 in
\cite{romik2015surprising}, where it is shown that
$Q[f]=\frac{1}{64}\int_{-\infty}^{\infty}|x|\cdot
|\hat{f}(x)|^{2}\dx$, where $\hat{f}$ is a Fourier transform of $f$.
Then the left-hand side of the inequality is a definition of a norm on a
Sobolev space $H^{\frac{1}{2}}$. Right-hand side is the Slobodeckij
norm on the same space and it is an equivalent one
\cite{ladyzhenskaia1968linear,lions2012non,heuer2014equivalence}, so
the inequality follows. It is also possible to prove that it is
actually the equality with $C=1$, using the integration by parts (See e.g.\cite{bufetov2010diagrams} Lemma 4).

Since $|\delta f_{n}(x)|\leq \frac{\sqrt{2}}{n}$ for all $x$, we can
estimate the right hand side of the equation \eqref{eq:64} by dividing
the domain of integration into two parts: ${|x-y|>\frac{\sqrt{2}}{n}}$,
where we us this estimate and obtain contribution of the order
$\frac{C}{n}$ and $|x-y|\leq \frac{\sqrt{2}}{n}$, where we use
Lipschitz property of $\delta f_{n}$ and  obtain the
contribution of the order $\frac{\tilde{C}\ln n}{n}$. Thus we see that
$  Q[\delta f_{n}]=\mathcal{O}\left(\frac{\ln n}{n}\right)$ and we
conclude that $\left\{-\frac{1}{(2n)^{2}}\ln
  \mu_{n}(\lambda_{n})\right\}$ converges to $J[f]$, so $J[f]\geq 0$.
\end{proof}
\begin{lemma}
  \label{lemma:eps-probability}
  For a highest weight $\lambda$ with the boundary of rotated Young diagram
  given by a function $f_{n}(x)$ such that $d(f_{n},f)=\varepsilon$, the
  probability is bounded by
  \begin{equation}
    \label{eq:88}
    \mu_{n}(\lambda)\leq C_{2} e^{-n^{2}\varepsilon^{2}+\mathcal{O}(n\ln n)}.
  \end{equation}
\end{lemma}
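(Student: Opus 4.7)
My plan is to expand $J[f_n]$ around the minimizer $f$, use the variational conditions to sign the linear cross term, and combine with the exponential form of the measure. By Lemma~\ref{lemma:func-form-of-prob-1},
\begin{equation*}
\mu_n(\lambda) = \exp\bigl(-(2n)^2 J[f_n] + \mathcal{O}(n \ln n)\bigr),
\end{equation*}
so it suffices to obtain a lower bound of the form $J[f_n] \geq \varepsilon^2$. Writing $\delta f_n = f_n - f$ and using the symmetry of the logarithmic kernel, I split
\begin{equation*}
J[f_n] = J[f] + L[\delta f_n] + Q[\delta f_n],
\end{equation*}
where $L[\delta f_n]$ is exactly the cross term \eqref{eq:94} already considered in Lemma~\ref{lemma:functional-greater-than-zero}. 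The first summand is non-negative by Lemma~\ref{lemma:functional-greater-than-zero}, and the third equals $\varepsilon^2$ by the definition $d_Q(f_n, f) = Q[f_n - f]^{1/2}$, so everything reduces to showing $L[\delta f_n] \geq 0$.

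For the cross term I would exploit the Euler--Lagrange conditions for $\rho$. Since both $f'$ and $f_n'$ integrate to $c - 4$ on $[-c/2, c/2]$ (see \eqref{eq:74}), the mean of $\delta f_n'$ vanishes, and I can freely subtract the constant $c - 4\ell$ from the inner integral:
\begin{equation*}
L[\delta f_n] = \frac{1}{16} \int_{-c/2}^{c/2} g(y)\, \delta f_n'(y)\, dy, \quad g(y) = \int_{-c/2}^{c/2} f'(x) \ln|x - y|^{-1}\, dx - (c - 4\ell).
\end{equation*}
I plan to verify $g(y)\, \delta f_n'(y) \geq 0$ pointwise by splitting into the two regimes. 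For $c \geq 4$, Proposition~\ref{theorem:deift-min-suff-cond} gives $g \equiv 0$ on $\supp \rho = [-\sqrt{2c-4}, \sqrt{2c-4}]$ and $g \leq 0$ on the remainder of $[-c/2, c/2]$; on that remainder $f'(y) = 1$, whereas $f_n'(y) \in \{-1, 1\}$, so $\delta f_n'(y) \leq 0$. For $c \in [2, 4]$, the rewriting $\rho = 1/2 - \rho_1$ and Proposition~\ref{theorem:min-suff-cond-2} flip the sign of the analogous $g$, making it non-negative and supported on $|y| \in [\sqrt{2c-4}, c/2]$; on that annulus $f'(y) = -1$, so $\delta f_n'(y) = f_n'(y) + 1 \geq 0$. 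In both regimes the integrand is non-negative.

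Combining the three estimates yields $J[f_n] \geq \varepsilon^2$, hence
\begin{equation*}
\mu_n(\lambda) \leq \exp\bigl(-4 n^2 \varepsilon^2 + \mathcal{O}(n \ln n)\bigr),
\end{equation*}
which is stronger than the claimed bound; the constant $C_2$ absorbs the subleading contributions from Stirling approximation. The main obstacle I anticipate is the pointwise sign check for $L[\delta f_n]$ in the regime $c \in [2, 4]$: because $\rho$ saturates its upper bound $1/2$ on the outer annulus rather than vanishing there, one must use Proposition~\ref{theorem:min-suff-cond-2} in place of Proposition~\ref{theorem:deift-min-suff-cond}, and carefully match the orientations of the variational inequalities to the regions where $f'$ takes each of the values $\pm 1$.
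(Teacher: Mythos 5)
Your proposal is correct and follows essentially the same route as the paper: the decomposition $J[f_n]=J[f]+L[\delta f_n]+Q[\delta f_n]$, the non-negativity of $J[f]$ from Lemma~\ref{lemma:functional-greater-than-zero}, the identity $Q[\delta f_n]=\varepsilon^2$ from the definition of $d_Q$, and the sign argument for $L[\delta f_n]$ via the subtraction of the constant $c-4\ell$ justified by $\int\delta f_n'=0$. Your explicit case split for $c\in[2,4]$, where $\rho$ saturates $1/2$ on the outer annulus so that both the sign of the bracket and the sign of $\delta f_n'$ flip via Proposition~\ref{theorem:min-suff-cond-2}, is in fact spelled out more carefully than in the paper's own proof, which treats both regimes in one sentence.
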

\begin{proof}
Denote by $\delta f_{n}$ a difference of $f_{n}$ and $f$: $\delta f_{n}(x)=f_{n}(x)-f(x)$.
The function $\delta f_{n}(x)$ is a Lipschitz with a compact support.
Then from Lemma \ref{lemma:func-form-of-prob-1} we can write the probability
$\mu_{n}(\lambda)=\exp\left(-(2n)^{2}J[f_{n}]+\mathcal{O}(n\ln
  n)\right)$ in the same way as in Lemma \ref{lemma:functional-greater-than-zero}
as
  \begin{equation}
   \label{eq:66}
    \mu_{n}(\delta f_{n})=e^{\left(-(2n)^{2} \left(
        J[f]+Q[\delta f_{n}] +L[\delta f_{n}]\right)\right)}\cdot\exp\left(\mathcal{O}(n\ln n)\right).
  \end{equation}
By the condition of the lemma we have $Q[\delta f_{n}]=\varepsilon^{2}$. 

Similarly to Lemma \ref{lemma:functional-greater-than-zero} we
demonstrate that the term
$$L[\delta f_{n}]:=\frac{1}{16}
\int_{-\frac{c}{2}}^{\frac{c}{2}}\int_{-\frac{c}{2}}^{\frac{c}{2}}
f'(x) \ln |x-y|^{-1} \delta f_{n}'(y) \dx\,\dy$$
can be written as
$\frac{1}{16} \int_{-\frac{c}{2}}^{\frac{c}{2}}\left[
  \int_{-\frac{c}{2}}^{\frac{c}{2}} f'(x) \ln |x-y|^{-1}
  \dx-(c-4\ell)\right]\,\delta f_{n}'(y)\dy.$ The expression inside the
brackets is zero if $|y|<\sqrt{2c-4}$ and less than zero if
$y>\sqrt{2c-4}$ (greater than zero for $y<-\sqrt{2c-4}$), since
logarithm is a monotonic function. Since $f'(x)=1$ and
$|f_{n}'(x)|=1$ for $|x|>\sqrt{2c-4}$, we see that
$\delta f_{n}(y)\leq 0$ for $y>\sqrt{2c-4}$ (and greater than zero for
$y<-\sqrt{2c-4}$) and conclude that value of $L[\delta f_{n}]$ is
non-negative. Also if
$\mathrm{supp}\,\delta f_{n}\subset [-\sqrt{2c-4},\sqrt{2c-4}]$ this
contribution is exactly zero.

From Lemma
\ref{lemma:functional-greater-than-zero} we have $J[f]\geq 0$,
so we see that ${\exp\left(-(2n)^{2}\left(J[f]+L[\delta
    f_{n}]\right)\right)<C_{2}}$ and the lemma is proven.
\end{proof}

\begin{lemma}
  As $n\to\infty$ rotated Young diagrams for highest weights in the decomposition
  of tensor power of the spinor representation of simple Lie algebra $\son$
  into irreducible representations converge in probability in the metric
  $d_{Q}$ to the limiting shape given by the formulas \eqref{eq:62},
  \eqref{eq:61}, \eqref{eq:68}. That is, for all $\varepsilon>0$ we have
  \begin{equation}
    \label{eq:67}
    \mathbb{P}\left(||f_{n}-f||_{Q}>\varepsilon\right)\xrightarrow[n\to\infty]{} 0.
  \end{equation}

\end{lemma}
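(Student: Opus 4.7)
The plan is to combine the pointwise exponential bound on $\mu_n(\lambda)$ from Lemma~\ref{lemma:eps-probability} with a crude count of the total number of admissible weights, following the general union-bound strategy of the Vershik--Kerov--Logan--Shepp argument.

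First I would observe that every highest weight $\lambda$ appearing in the decomposition of $(V^{\omega_n})^{\otimes N}$ has coordinates $(a_1,\ldots,a_n)$ satisfying $0 < a_n < a_{n-1} < \cdots < a_1 \le N + 2n - 1$, since otherwise one of the factorial arguments in the numerator of the measure~\eqref{meas} would be negative and the multiplicity would vanish. As $N + 2n - 1 = cn + O(1)$, the total number of such weights is at most $\binom{cn+O(1)}{n}$, which in turn is bounded by $\exp(K n\ln n)$ for some constant $K = K(c)$.

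Second I would upgrade Lemma~\ref{lemma:eps-probability} from the equality $d_{Q}(f_n,f)=\varepsilon$ to the inequality $d_{Q}(f_n,f) \ge \varepsilon$. Inspection of its proof gives the stronger pointwise bound $\mu_n(\lambda) \le C_2 \exp\bigl(-(2n)^2 [d_{Q}(f_n,f)]^2 + O(n\ln n)\bigr)$, which is monotone decreasing in $d_{Q}$. On the event $\{d_{Q}(f_n,f) > \varepsilon\}$ this is therefore at most $C_2 \exp\bigl(-4n^2\varepsilon^2 + O(n\ln n)\bigr)$.

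A union bound over the at most $\exp(K n\ln n)$ weights in this set then gives
\begin{equation}
    \mathbb{P}(||f_n-f||_{Q} > \varepsilon) \le C_2 \exp\bigl(K n\ln n - 4n^2\varepsilon^2 + O(n\ln n)\bigr) \xrightarrow[n\to\infty]{} 0
\end{equation}
for every fixed $\varepsilon > 0$, since the $n^2$ rate dominates the $n\ln n$ corrections. The nontrivial work has already been carried out in Lemma~\ref{lemma:eps-probability} (which itself relies on the non-negativity of $J[f]$ from Lemma~\ref{lemma:functional-greater-than-zero} and on the identification of $L[\delta f_n]$ as non-negative); the counting step is a straightforward combinatorial estimate and is not expected to present an obstacle.
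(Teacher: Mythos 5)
Your proposal is correct and follows essentially the same route as the paper: a union bound combining the pointwise exponential decay from Lemma~\ref{lemma:eps-probability} with an $e^{\mathcal{O}(n\ln n)}$ count of the admissible highest weights (the paper counts via the weight hypercube divided by the order of the Weyl group, you count strictly decreasing sequences of the $a_i$; both yield the same order and are dominated by the $n^2\varepsilon^2$ term). Your explicit remark that the bound of Lemma~\ref{lemma:eps-probability} must be used for $d_Q(f_n,f)\ge\varepsilon$ rather than equality is a point the paper glosses over, but it is handled exactly as you suggest.
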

\begin{proof}
  By Lemma \ref{lemma:eps-probability} the probability of each highest weight
  $\lambda$ with a rotated Young diagram with boundary $f_{n}$ such that
  $||f_{n}-f||_{Q}>\varepsilon$ is bounded by
  $e^{-n^{2}\varepsilon^{2}+\mathcal{O}(n\ln n)}$. We also know that number of
  such highest weights is not greater than the total number of dominant
  integral weights in the reducible representation
  $\left(V^{\omega_{n}}\right)^{\otimes N}$. Let us estimate this number.
  Weight diagram of $\left(V^{\omega_{n}}\right)^{\otimes N}$ is an
  $n$-dimensional hypercube with a side $2N$ which contains at most $(2N)^{n}$
  integral weights. But this hypercube is then divided into Weyl chambers.
  Total number of Weyl chambers is equal to the order of Weyl group of
  $B_{n}$, which is $2^{n}n!$. Thus we can estimate the number of integral
  dominant weights as being not greater than
  $\frac{2^{n}N^{n}}{2^{n} n!}<e^{n\ln n}$. Combining the bound on number of
  highest weights and the bound on probability we come to the conclusion that
  \begin{equation}
    \label{eq:95}
    \mathbb{P}\left(||f_{n}-f||_{Q}>\varepsilon\right)<e^{-n^{2}\varepsilon^{2}+\mathcal{O}(n\ln
    n)}\xrightarrow[n\to\infty]{} 0.
  \end{equation}

\end{proof}

Now from this lemma and from \eqref{eq:87} follows Theorem \ref{diagram-convergence-theorem}.

We have obtained the limit shape for Young diagrams for tensor product
decomposition of tensor powers of last fundamental representation of Lie
algebras of series $\son$. In the Figure \ref{fig:Bn-limit-shapes-3} we
present limit shapes for $c=3$, $c=4$ and $c=6$ as well as the most probable
diagram for $n=20$ and $N=40$ ($c\approx 4$).

\begin{figure}
  \label{fig:Bn-limit-shapes-3}
   \includegraphics{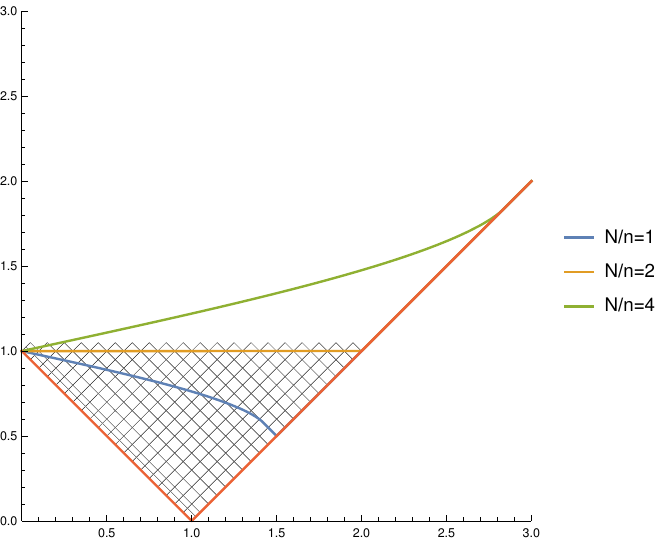}
   \caption{Limit shapes for Young diagrams for $c=3$ (Blue), $c=4$ (Yellow)
     and $c=6$ and a most probable diagram for $n=20, N=40$. Plots
     are by the explicit formulas (\ref{eq:68},\ref{eq:62})}
\end{figure}

\section{Proof of the Central limit theorem}
\label{sec:centr-limit-theor}


In general, Central Limit Theorems for the determinantal point
processes can be proven by the use of discrete orthogonal polynomials
and deducing their continuous asymptotic, as described in the
book~\cite{baik2007discrete} and many papers, for
example~\cite{breuer2017central}. In our case the ensemble is a little
bit different, since the measure \eqref{meas} contains a Vandermonde
determinant written in terms of squares of variable $a_{i}$. Therefore
some care is required, below we describe how to adapt the results
of~\cite{breuer2017central} to our case.

Introduce the variables $y_{i}=\frac{a_{i}^{2}}{(2n)^{2}}$, the
measure \eqref{meas} takes the determinantal form
\begin{equation}
  \label{eq:109}
  \mu_{n,N}(y_{1},\dots,y_{n})=\frac{1}{Z_{N,n}}\prod_{i<j}(y_{i}-y_{j})^{2}\cdot
  \prod_{l=1}^{n} y_{l}
  \frac{(N+2n-1)!}{\left(\frac{N+2n-1}{2}-\frac{2n\sqrt{y_{l}}}{2}\right)!
    \left(\frac{N+2n-1}{2}+\frac{2n\sqrt{y_{l}}}{2}\right)!},
\end{equation}
where we have rewritten the weight in the binomial form and collected
the terms that do not depend on $y_{i}$ into the normalization
constant $Z_{N,n}$. Denote the weight by $W^{(n)}(y)$:
\begin{equation}
  \label{eq:121}
  W^{(n)}(y)=y
  \frac{(N+2n-1)!}{\left(\frac{N+2n-1}{2}-\frac{2n\sqrt{y}}{2}\right)!
  \left(\frac{N+2n-1}{2}+\frac{2n\sqrt{y}}{2}\right)!},
\end{equation}
and introduce the discrete orthonormal polynomials
$p^{(n)}_{m}(y)$ on a quadratic lattice:
\begin{equation}
  \label{eq:119}
  \sum_{i=1}^{N+2n-1} p^{(n)}_{m}\left(\frac{i^{2}}{(2n)^{2}}\right)
  p^{(n)}_{l}\left(\frac{i^{2}}{(2n)^{2}}\right)
  W^{(n)}\left(\frac{i^{2}}{(2n)^{2}}\right) = \delta_{m,l}. 
\end{equation}
The orthogonal polynomials satisfy the three-term recurrence relation
\begin{equation}
  \label{eq:122}
  xp^{(n)}_{m}(x)=a^{(n)}_{m+1}p^{(n)}_{m+1}(x)+b^{(n)}_{m}p^{(n)}_{m}(x)+a^{(n)}_{m}p^{(n)}_{m-1}(x). 
\end{equation}
The measure is then rewritten as a determinantal ensemble $\mathcal{P}^{(n)}$:
\begin{equation}
  \label{eq:120}
    \mu_{n,N}(y_{1},\dots,y_{n})=\det\left(\sqrt{W^{(n)}(y_{i}) W^{(n)}(y_{j})}
      \sum_{l=0}^{n-1}p^{(n)}_{l}(y_{i})\;
      p^{(n)}_{l}(y_{j})\right)_{i,j=1}^{n}.
\end{equation}

To prove the Central limit theorem we would like to apply the following theorem:
\begin{theorem}[Theorem 2.5 in \cite{breuer2017central}]
  \label{thm:breuer-clt}
  Let $\{p^{(n)}_{m}(x)\}_{m=0}^{n-1}$ be normalized orthogonal
  polynomials of the polynomial ensemble $\mathcal{P}_{n}$, that
  satisfy the three-term recurrence relation
  \begin{equation}
    \label{eq:125}
    xp^{(n)}_{m}(x)=a^{(n)}_{m+1}p^{(n)}_{m+1}(x)+b^{(n)}_{m+1}p^{(n)}_{m}(x)+a^{(n)}_{m}p^{(n)}_{m-1}(x),
  \end{equation}
  and assume that there exists a subsequence $\{n_{j}\}_{j}$ and
  $a>0, b\in\mathbb{R}$ such that for any $k\in\mathbb{Z}$ we have
  \begin{equation}
    \label{eq:126}
    a^{(n_{j})}_{n_{j}+k}\to a, \quad b^{(n_{j})}_{n_{j}+k}\to b, 
  \end{equation}
  as $j\to\infty$. Then for any real-valued $f\in C^{1}(\mathbb{R})$
  we have
  \begin{equation}
    \label{eq:127}
    X_{f}^{(n_{j})}-\mathbb{E}X_{f}^{(n_{j})}\to
    \mathcal{N}\left(0,\sum_{l\geq 1}l|\hat{f}_{l}|^{2}\right), \quad
    \text{as}\; j\to\infty,
  \end{equation}
  in distribution, where the coefficients $\hat{f}_{l}$ are defined as
  \begin{equation}
    \label{eq:128}
    \hat{f}_{l}=\frac{1}{2\pi}\int_{0}^{2\pi} f(2a\cos \theta +b) e^{-il\theta}d\theta,
  \end{equation}
  for $l\geq 1$. When $n_{j}=j$, that is the subsequence is the whole
  sequence, \eqref{eq:126} is equivalent to
  \begin{equation}
    \label{eq:129}
    a^{(n)}_{n}\to a,\quad b^{(n)}_{n}\to b.
  \end{equation}
\end{theorem}

To use this theorem we need to establish the convergence of the
recursion coefficients. We do this by relating the polynomials
$p^{(n)}_{m}$ to the Krawtchouk polynomials using the ``lifting''
procedure from \cite{buhmann1992orthogonal} which is a variant of the
QR-algorithm for the Christoffel transformation of orthognal
polynomials~\cite{galant1971implementation},~\cite{bueno2004darboux}.

First note, that the polynomials $p^{(n)}_{m}(y)$ with $y=x^{2}$ are
just even polynomials from the set of polynomials
$\widetilde{p}^{(n)}_{k}(x)$ that are orthonormal with respect to the
weight $W^{(n)}(x^{2})$:
\begin{equation}
  \label{eq:124}
  \sum_{i=-(N+2n-1)}^{N+2n-1} \widetilde{p}^{(n)}_{m}\left(\frac{i}{2n}\right)
  \widetilde{p}^{(n)}_{l}\left(\frac{i}{2n}\right)
  W^{(n)}\left(\frac{i^{2}}{(2n)^{2}}\right) = \delta_{m,l}.   
\end{equation}
Since weight $W^{(n)}(x^{2})$ is even as a function of $x$ and since
each value of $y$ appears in \eqref{eq:124} twice, we have

\begin{equation}
  \label{eq:123}
  p^{(n)}_{m}\left(x^{2}\right)=\sqrt{2}\widetilde{p}^{(n)}_{2m}(x). 
\end{equation}
Then the recursion coefficients $\widetilde{b}^{(n)}_{k}=0$ for all $k$ for
the polynomials $\widetilde{p}^{(n)}_{k}$, and the recursion
coefficients $a^{(n)}_{m}, b^{(n)}_{m}$ are expressed in terms of
$\widetilde{a}^{(n)}_{k}$ as
\begin{equation}
  \label{eq:130}
  \begin{array}{l}
    a^{(n)}_{m}=\widetilde{a}^{(n)}_{2m}\;\widetilde{a}^{(n)}_{2m-1}\\
    b^{(n)}_{m}=\left(\widetilde{a}^{(n)}_{2m}\right)^{2}+\left(\widetilde{a}^{(n)}_{2m+1}\right)^{2}.
  \end{array}
\end{equation}
From now on it is more convenient to work with monic orthogonal
polynomials $\widetilde{P}^{(n)}_{m}(x)$, that correspond to the
orthonormal polynomials $\widetilde{p}^{(n)}_{m}(x)$ and satisfy the
recurrence relation
\begin{equation}
  \label{eq:131}
  x \widetilde{P}^{(n)}_{m}(x)=\widetilde{P}^{(n)}_{m+1}(x)+
  \left(\widetilde{a}^{(n)}_{m}\right)^{2}\widetilde{P}^{(n)}_{m-1}(x).
\end{equation}

Now consider monic Krawtchouk polynomials $\widetilde{K}_{l}(z;q,M)$
that satisfy the orthogonality relation
\cite{koekoek2010hypergeometric}
\begin{equation}
  \label{eq:132}
  \sum_{i=0}^{M}\binom{M}{i} q^{i}(1-q)^{M-i}\widetilde{K}_{l}(i;q,M)
  \widetilde{K}_{j}(i;q,M)= \delta_{l,j}\cdot q^{l}(1-q)^{l}\; l!\;\cdot\prod_{i=0}^{l-1}(M-i).
\end{equation}
and recurrence relation
\begin{multline}
  \label{eq:133}
  z\widetilde{K}_{m}(z;q,M)=\widetilde{K}_{m+1}(z;q,M)+
  \left[q(M-m)+m(1-q)\right]\widetilde{K}_{m}(z;q,M)+\\
  +mq(1-q)(M+1-m)\widetilde{K}_{m-1}(z;q,M),
\end{multline}
for $z=0,\dots,M$. Take $q=\frac{1}{2}$, $M=N+2n-1$, shift and rescale the
argument $\widetilde{z}=\frac{1}{n}(z-\frac{N+2n-1}{2})$, then monic
orthogonal polynomials
$\widetilde{K}^{*}_{m}(\widetilde{z})=
n^{-m}\widetilde{K}_{m}\left(\frac{N+2n-1+2n\widetilde{z}}{2};\frac{1}{2},N+2n-1\right)$
have weight
$W^{*}(\widetilde{z})=\frac{(N+2n-1)!}{\left(\frac{N+2n-1-2n\widetilde{z}}{2}\right)!
  \left(\frac{N+2n-1+2n\widetilde{z}}{2}\right)!}$ and satisfy the recursion relation
\begin{equation}
  \label{eq:134}
  \widetilde{z}\widetilde{K}^{*}_{m}(\widetilde{z})=\widetilde{K}^{*}_{m+1}(\widetilde{z})
  +\frac{m}{4n^{2}}(N+2n-m)\widetilde{K}^{*}_{m-1}(\widetilde{z}).
\end{equation}
We see that the weight $W^{(n)}(x^{2})$ of the polynomials
$\widetilde{P}^{(n)}_{m}$ differs from the weight $W^{*}(x)$ of
$\widetilde{K}^{*}_{m}$ by a factor $x^{2}$. Therefore the polynomials
$\widetilde{P}_{m}$ are expressed in terms of the polynomials
$\widetilde{K}^{*}_{m}$ and their derivatives $\left(\widetilde{K}^{*}_{m}\right)'$ by the
Christoffel transformation \cite[section 2.5]{ismail2020encyclopedia}:
\begin{equation}
  \label{eq:135}
  \widetilde{P}^{(n)}_{m}(x)=\frac{1}{C_{m,2} x^{2}}\left|
  \begin{matrix}
    \widetilde{K}^{*}_{m}(0)&     \widetilde{K}^{*}_{m+1}(0) &
    \widetilde{K}^{*}_{m+2}(0)\\
    \left(\widetilde{K}^{*}_{m}\right)'(0)&     \left(\widetilde{K}^{*}_{m+1}\right)'(0) &
    \left(\widetilde{K}^{*}_{m+2}\right)'(0)\\
    \widetilde{K}^{*}_{m}(x)&     \widetilde{K}^{*}_{m+1}(x) &
    \widetilde{K}^{*}_{m+2}(x)
  \end{matrix}
  \right|,
\end{equation}
where
\begin{equation}
  \label{eq:136}
  C_{m,2}=\left|
    \begin{matrix}
      \widetilde{K}^{*}_{m}(0)&     \widetilde{K}^{*}_{m+1}(0) \\
    \left(\widetilde{K}^{*}_{m}\right)'(0)&     \left(\widetilde{K}^{*}_{m+1}\right)'(0) 
  \end{matrix}
  \right|.
\end{equation}
Such transformation of polynomials leads to the following
transformation of recursion coefficients: take three-diagonal matrix
of the recursion coefficients, compute its QR-factorization and
multiply the factors in reversed order
\cite{galant1971implementation}.

In the case of monic polynomials with recurrent relation of the form
$
\widetilde{z}\widetilde{K}^{*}_{m}(\widetilde{z})=\widetilde{K}^{*}_{m+1}(\widetilde{z})
+\beta_{m}\widetilde{K}^{*}_{m-1}(\widetilde{z})$ this transformation
is given by the relations \cite{buhmann1992orthogonal}[formula (2.10)]:  
\begin{equation}
  \label{eq:137}
  \begin{array}{l}
    \widetilde{\beta}_{2m}=\frac{\beta_{2m}\beta_{2m+1}}{\widetilde{\beta}_{2m-1}},\\
    \widetilde{\beta}_{2m+1}=\beta_{2m+1}+\beta_{2m+2}-\widetilde{\beta}_{2m},
  \end{array}
\end{equation}
where $\beta_{m}=\frac{m}{4n^{2}}(N+2n-m)$.
We were not able to solve these relations explicitly since the
coefficients $\widetilde{\beta}_{m}$ are given by the continuous fractions
of $\beta_{l}$ and are cumbersome, but we can write asymptotic
solutions in the form
\begin{equation}
  \label{eq:138}
  \begin{array}{l}
    \widetilde{\beta}_{2m}=\beta_{2m}-\frac{m}{3n^{2}}+\mathcal{O}\left(\frac{1}{n^{2}}\right),\\
    \widetilde{\beta}_{2m+1}=\beta_{2m+1}+\frac{N}{2n^{2}}-\frac{5m}{3n^{2}}-\frac{1}{2n^{2}}+
    \mathcal{O}\left(\frac{1}{n^{3}}\right).
  \end{array}
\end{equation}
Substituting these asymptotics to the equation \eqref{eq:137} for
$\widetilde{\beta}_{2m+2}$ we check that
\begin{multline}
  \label{eq:139}
  \widetilde{\beta}_{2m+2}=\frac{\beta_{2m+2}\beta_{2m+3}}{\beta_{2m+1}+
    \frac{N}{2n^{2}}-\frac{5m}{3n^{2}}-\frac{1}{2n^{2}}+
    \mathcal{O}\left(\frac{1}{n^{3}}\right)}= \\
  =\frac{\beta_{2m+2}\beta_{2m+3}}{\beta_{2m+3}+\frac{m+1}{3n^{2}}+\frac{2}{3n^{2}}
    +\mathcal{O}\left(\frac{1}{n^{2}}\right)}=
  \\
  =\beta_{2m+2}+\frac{m+1}{3n^{2}}
  +\mathcal{O}\left(\frac{1}{n^{2}}\right).     
\end{multline}
Therefore for $m=2n$ we have the asymptotics
$\widetilde{\beta}_{2n}=\frac{N}{2n}+\mathcal{O}\left(\frac{1}{n}\right)=\frac{c-2}{2}+\mathcal{O}\left(\frac{1}{n}\right)$
and thus using \eqref{eq:130},\eqref{eq:131} we conclude that $a^{(n)}_{n}\to \frac{c-2}{2}$ and
$b^{(n)}_{n}\to c-2$. Then we can apply Theorem \ref{thm:breuer-clt}
to conclude the proof of the Central limit theorem. 

\section*{Conclusion}

We have proven the convergence of irreducible components in tensor
powers of the spinor representation of $\son$ to the limit shape. It
is possible to write an alternative proof based on the general results
on discrete beta-ensembles, presented in the book by Alice Guionnet
\cite{guionnet2019asymptotics}. 

Similar limit shapes can be obtained for the tensor powers of the
certain reducible representations of $\gln,\sond,\spn$. This result
is presented in the paper by A.~Nazarov, O.~Postnova
and T.~Scrimshaw \cite{2021arXiv211112426N}.

It would be interesting to obtain similar result for the tensor powers
of other irreducible representations and for Lie algebras of other
classical series. Unfortunately, there are no known explicit formulas
for the tensor product decomposition coefficients for the tensor
powers of the irreducible representations in the cases except
$V^{\omega_{1}}$ for $A_{n-1}$ and $V^{\omega_{n}}$ for $\son$. So
this generalization remains an unsolved problem for the future.

Another possible generalization is to consider the character measure
\begin{equation}
  \label{eq:104}
   p^N(\lambda,t)=\frac{M^{N}_{\lambda}\chi_{\lambda}(e^{t})}{\chi_{\nu}(e^{t})^{N}}. 
\end{equation}
In the paper \cite{tate2004lattice} an asymptotic formula for tensor
product decomposition coefficients was obtained. The character measure
was studied in the case of Lie albegras of fixed rank $n$ in the
papers \cite{Postnova2019}. In a separate publication we will consider
the limit $n,N\to\infty$ for the character measure.

It would be also very interesting to establish the connection of the present limit shape
with random matrices. Such a connection is well known for Vershik-Kerov-Logan-Shepp
limit shape \cite{borodin2001robinson}, \cite{bufetov2013kerov}.

We also plan to study the local fluctuations around the limit shape
presented in this paper. We expect to obtain an analogue of
Baik-Deift-Johansson theorem \cite{baik1999distribution}.

The entropy of the Plancherel measure for the $S_{n}$-representations
and $\sln{n}$-representations was considered in the papers
\cite{vershik1985asymptotic}, \cite{bufetov2012vershik},
\cite{mkrtchyan2014entropy}. Our preliminary numerical calculations
demonstrate that similar result holds for the series $\son$. 

\section*{Acknowledgements}

We are grateful to professor Nikolai Reshetikhin and professor Anatoly
Vershik for the attention to this work. We thank Pavel Belov and
Sergey Simonov for useful discussions.

The authors thank the Sirius Mathematics Center for their hospitality
and support during our stay at Sirius where some work on this paper
has been done. We acknowledge hospitality and support from Galileo
Galilei Institute, and from the scientific program on "Randomness,
Integrability, and Universality", where some of the results were
presented.

Pavel Nikitin is supported by the Russian Science Foundation under
grant No.~21-11-00152.

\printbibliography

\end{document}